\newtheorem{theorem}{Theorem}
\newtheorem{definition}{Definition}
\newtheorem{example}{Example}
\newtheorem{lemma}{Lemma}
\newtheorem{proposition}{Proposition}
\newtheorem{remark}{Remark}
\newenvironment{proof}[1][Proof]{\noindent\textbf{#1.} }{\ \rule{0.5em}{0.5em} \\}
\def\t{\tau}
\def\R{\mathbb{R}}
\def\N{\mathbb{N}}
\newcommand{\fonction}[5]{\begin{array}[t]{lrcl}#1 :&#2 &\longrightarrow &#3\\&#4& \longmapsto &#5 \end{array}}
\begin{document}

\author{Lo\"ic Bourdin \\ Institut de Recherche XLIM \\ D\'epartement de Math\'ematiques et d'Informatique DMI \\ Universit\'e de Limoges\\UMR CNRS 7252, Limoges, France \\ \textit{e-mail}: loic.bourdin@unilim.fr
\and Dariusz Idczak\\Faculty of Mathematics and Computer Science\\University of Lodz\\Banacha 22, 90-238 Lodz, Poland\\\textit{e-mail}: idczak@math.uni.lodz.pl}
\title{A fractional fundamental lemma and a fractional integration by parts formula -- Applications to critical points of Bolza functionals and to linear boundary value problems}
\date{}
\maketitle

\begin{abstract}
In this paper we first identify some integrability and regularity issues that frequently occur in fractional calculus of variations. In particular, it is well-known that Riemann-Liouville derivatives make boundary singularities emerge. The major aim of this paper is to provide a framework ensuring the validity of the fractional Euler-Lagrange equation in the case of a Riemann-Liouville derivative of order $\alpha \in (0,1)$. For this purpose, we consider the set of functions possessing $p$-integrable Riemann-Liouville derivatives and we introduce a class of quasi-polynomially controlled growth Lagrangian.

In the first part of the paper we prove a new fractional fundamental (du Bois-Reymond) lemma and a new fractional integration by parts formula involving boundary terms. The proof of the second result is based on an integral representation of functions possessing Riemann-Liouville derivatives. In the second part of the paper we give not only a necessary optimality condition of Euler-Lagrange type for fractional Bolza functionals, but also necessary optimality boundary conditions. Finally we give an additional application of our results: we prove an existence result for solutions of linear fractional boundary value problems. This last result is based on a Hilbert structure and the classical Stampacchia theorem.
\end{abstract}

\noindent \textbf{Keywords}: fractional Riemann-Liouville derivative; fundamental lemma; integration by parts; Euler-Lagrange equation; boundary value problem; existence result.\\

\noindent \textbf{AMS Classification}: 26A33; 49K99; 70H03.

\tableofcontents

\section{Introduction}
We first introduce some notations available in the whole paper. Let $m$ be a nonzero integer and let $\Vert \cdot \Vert$ be the Euclidean norm of $\R^m$. Let $(a,b) \in \R^2$ such that $a<b$. For any $1 \leq p \leq \infty$, by $\mathrm{L}^p := \mathrm{L}^p((a,b),\R^m)$ (resp. $\mathrm{W}^{1,p} := \mathrm{W}^{1,p}((a,b),\R^m)$) we denote the usual $p$-Lebesgue space (resp. $p$-Sobolev space) endowed with its usual norm $\Vert \cdot \Vert_{\mathrm{L}^p}$ (resp. $\Vert \cdot \Vert_{\mathrm{W}^{1,p}}$) and by $p' := \frac{p}{p-1}$ we denote the usual adjoint of $p$. By $\mathrm{AC} := \mathrm{AC} ( [a,b],\mathbb{R}^{m})$ we denote the classical space of absolutely continuous functions on $[a,b]$. Finally, by $\mathscr{C}_{c}^{\infty}:=\mathscr{C}_{c}^{\infty}( [a,b],\mathbb{R}^{m})$ we denote the
classical space of smooth functions on $[a,b]$ with compact supports contained in $(a,b)$.

\paragraph*{Fractional calculus of variations, a well-known strategy.} Fractional calculus is the mathematical field that deals with the generalization of the classical notions of integral and derivative to any positive real order, see \cite{KilSri,Samko} for a detailed study. It has numerous applications in various areas of science, mainly to study anomalous processes. Many physical applications can be found in the monograph \cite{Hermann}. 

The introduction of the fractional operators in calculus of variations is due to F.~Riewe in \cite{Riewe} where he aimed at giving a fractional variational structure to some non conservative systems. Since \cite{Riewe}, numerous necessary optimality conditions of Euler-Lagrange type were obtained for functionals depending on different notions of fractional integrals and/or derivatives (Riemann-Liouville, Caputo, etc.). These fractional functionals have different forms and are defined on different function spaces with different initial/boundary conditions. It is not our aim to give a state of the art here. We refer to \cite{Agrawal,Almeida2,Almeida1,Almeida3,BM,Cresson,NabuTorr,Odz1,Odz2} and to the monographs \cite{Klimek,MalTor} for numerous examples.

The above results are mostly based on a common strategy that we can briefly sum up as follows. Let us consider a functional of type:
$$ \fonction{\Phi}{\mathrm{E}}{\R}{q}{\displaystyle \int_a^b L(\t,q(\t),(\mathrm{O}^\alpha_{a+} q) (\t) ) \; d\t ,}$$
where $\mathrm{E}$ is a function space, $L : (t,x,v) \in [a,b] \times \R^m \times \R^{m} \longmapsto L(t,x,v) \in \R$ is a regular Lagrangian and $\mathrm{O}^\alpha_{a+}$ is a left-sided fractional operator (Riemann-Liouville integral or Caputo derivative for example) of order $\alpha \in (0,1)$. The first variation $\delta\Phi(q,h)$ of the functional $\Phi$ at a point $q \in \mathrm{E}$ in a direction $h \in \mathrm{E}$ such that $h(a)=h(b)=0$ is given by:
\begin{equation*}
\delta\Phi(q,h)= \int_a^b  L_x (\t,q(\t),(\mathrm{O}^\alpha_{a+} q)(\t) ) \cdot h(\t) + L_v (\t,q(\t),(\mathrm{O}^\alpha_{a+} q)(\t) ) \cdot (\mathrm{O}^\alpha_{a+} h)(\t) \; d\t .
\end{equation*}
Using a fractional integration by parts formula on the second term of the integrand, one obtains:
$$ \delta\Phi(q,h)= \int_a^b \Big[ L_x (\t, q(\t),(\mathrm{O}^\alpha_{a+} q)(\t) ) + \mathrm{O}^\alpha_{b-} ( L_v (\cdot,q,\mathrm{O}^\alpha_{a+} q ) ) (\t) \Big] \cdot h(\t) \; d\t ,$$
where $\mathrm{O}^\alpha_{b-}$ is the right-sided fractional operator associated with $\mathrm{O}^\alpha_{a+}$. Finally, the fundamental lemma (recalled in Lemma~\ref{CDBR1}) concludes that a necessary condition for critical points of the functional $\Phi$ is given by the following fractional Euler-Lagrange equation:
$$ L_x (t,q(t),(\mathrm{O}^\alpha_{a+} q)(t) ) + \mathrm{O}^\alpha_{b-} ( L_v (\cdot,q,\mathrm{O}^\alpha_{a+} q) )(t) =0, \quad t \in [a,b] \text{ a.e.} $$

\paragraph*{Some integrability and regularity issues.} Note that some integrability and regularity issues occur in the above strategy, in particular in the case where $\mathrm{O}^\alpha_{a+}$ is the Riemann-Liouville derivative. For instance:
\begin{enumerate}
\item The functional $\Phi$ is well-defined provided that the integrability of $ L(\cdot,q ,\mathrm{O}^\alpha_{a+} q )$ is satisfied for any $q \in \mathrm{E}$. For example, in the case where $q(a) \neq 0$ and $\mathrm{O}^\alpha_{a+}$ is the Riemann-Liouville derivative, $\mathrm{O}^\alpha_{a+} q$ admits a singularity at $t = a$, even if $q$ is a smooth function. Hence, the integrability of $ L(\cdot,q ,\mathrm{O}^\alpha_{a+} q )$ is not ensured, even if $L$ is smooth.
\item The same issue occurs in the derivation of the first variation $\delta\Phi(q,h)$.
\item The use of a fractional integration by parts formula requires that $L_v (\cdot,q ,\mathrm{O}^\alpha_{a+} q ) $ satisfies some regularity hypotheses. In particular, one has to ensure that $\mathrm{O}^\alpha_{b-} ( L_v (\cdot,q,\mathrm{O}^\alpha_{a+} q) )$ is well-defined and has to prove that the scalar product $\mathrm{O}^\alpha_{b-} ( L_v (\cdot,q,\mathrm{O}^\alpha_{a+} q) ) \cdot h$ is integrable.
\item In the literature, necessary optimality conditions are mostly concerned with functions $q$ satisfying fixed boundary conditions of type $q(a)= q_a$ and $q(b)=q_b$. Consequently, variations $h$ are mostly assumed to vanish at $t = a$ and $t =b$ and then no boundary term emerges in the fractional integration by parts formula. In the case where authors would like to treat functions $q$ with free boundary values and to obtain necessary optimality boundary conditions, variations $h$ that do not vanish at $t = a$ and $t =b$ have to be considered. In such a case, fractional boundary terms would emerge in the fractional integration by parts formula provided that $L_v (\cdot,q ,\mathrm{O}^\alpha_{a+} q ) $ satisfies some regularity hypotheses.
\end{enumerate}
As a consequence, the choice of the function space $\mathrm{E}$, with respect to the left-sided fractional operator $\mathrm{O}^\alpha_{a+}$ considered, is crucial in order to valid each step of the above strategy. The choice of variations $h$ is also capital.  

\paragraph*{Contributions of this paper.} It is well-known that Riemann-Liouville derivatives make boundary singularities emerge. In this paper, our major aim is to provide a framework ensuring the validity of each step of the above strategy in the case where $\mathrm{O}^\alpha_{a+}$ is the left-sided fractional Riemann-Liouville derivative of order $\alpha \in (0,1)$. For this purpose:
\begin{enumerate}
\item We consider the function space $\mathrm{E} = \mathrm{AC}^{\alpha,p}_{a+}$ being the space of functions $q$ possessing $p$-integrable left-sided fractional Riemann-Liouville derivatives of order $\alpha \in (0,1)$, see Definitions~\ref{defdef1}, \ref{defdef2}, \ref{defdef3}, \ref{defdef4} in Sections~\ref{section2}, \ref{section3}.
\item We introduce a class of quasi-polynomially controlled growth Lagrangian $L$ ensuring the integrability of the integrands considered in the previous paragraphs, see Definitions~\ref{defdef5}, \ref{defdef6} in Section~\ref{section4}.
\end{enumerate}
Using integral representations of functions $q \in \mathrm{AC}^{\alpha,p}_{a+}$ (given in Proposition~\ref{dziewiec}), we prove a new fractional integration by parts formula involving boundary values, see Theorem~\ref{cze3}. Studying fractional Bolza functionals, this new formula will allow us to derive not only a necessary optimality condition of Euler-Lagrange type but also necessary optimality boundary conditions, see Theorem~\ref{TEL1}.

To conclude this paragraph, let us recall two versions of the classical fundamental lemma:

\begin{lemma}[classical fundamental lemma, version 1]\label{CDBR1}
If $q \in \mathrm{L}^{1}$ and
$$ \displaystyle \int_{a}^{b} q (\t) \cdot h(\t) \; d\t = 0 $$
for any function $h\in \mathscr{C}_{c}^{\infty}$, then $q = 0$ a.e.
\end{lemma}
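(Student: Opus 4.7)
The plan is to reduce to the scalar case and then to approximate characteristic functions of compact subintervals of $(a,b)$ by smooth test functions; the vanishing of the integral of $q$ on every such subinterval will then imply $q = 0$ a.e.\ via the Lebesgue differentiation theorem.

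First I would reduce to $m = 1$. Writing $q = (q_1, \ldots, q_m)$ and testing against functions of the form $h = \varphi\, e_j$, where $\varphi \in \mathscr{C}_c^\infty((a,b),\R)$ is a scalar test function and $e_j$ is the $j$-th canonical basis vector of $\R^m$, the hypothesis immediately gives $\int_a^b q_j(\t)\varphi(\t)\, d\t = 0$ for every such $\varphi$ and every index $j \in \{1,\dots,m\}$. It therefore suffices to treat the scalar version of the statement, so I assume $m = 1$ in what follows.

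Next, fix a compact subinterval $[c,d] \subset (a,b)$ and let $(\phi_\varepsilon)_{\varepsilon > 0}$ be a standard mollifier with $\phi_\varepsilon$ supported in $(-\varepsilon,\varepsilon)$. Set $h_\varepsilon := \chi_{[c,d]} \ast \phi_\varepsilon$. For $\varepsilon < \min(c-a,\, b-d)$, the function $h_\varepsilon$ is smooth, supported in a fixed compact subset of $(a,b)$, bounded by $1$, and converges pointwise a.e.\ to $\chi_{[c,d]}$; in particular $h_\varepsilon \in \mathscr{C}_c^\infty$. The dominated convergence theorem, with dominating function $|q| \in \mathrm{L}^1$, then yields
\[
0 \; = \; \int_a^b q(\t)\, h_\varepsilon(\t)\, d\t \; \xrightarrow[\varepsilon \to 0]{} \; \int_c^d q(\t)\, d\t .
\]
Hence the indefinite integral $t \mapsto \int_a^t q(\t)\, d\t$ is constant on $(a,b)$, and the Lebesgue differentiation theorem gives $q = 0$ a.e., as required.

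Since this is a thoroughly classical fact, I do not anticipate any substantive obstacle; the only slightly delicate point is ensuring that the mollified characteristic functions actually lie in $\mathscr{C}_c^\infty$ (i.e.\ that their supports remain in $(a,b)$), which is handled simply by shrinking $\varepsilon$ below $\min(c-a,b-d)$.
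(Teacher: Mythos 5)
Your argument is correct and complete: the reduction to the scalar case via $h=\varphi\,e_j$, the approximation of $\chi_{[c,d]}$ by mollified test functions supported in $(a,b)$, the passage to the limit by dominated convergence, and the final appeal to the Lebesgue differentiation theorem are all sound. Note that the paper does not actually prove Lemma~\ref{CDBR1}; it is merely recalled as a classical fact, so there is no internal proof to compare against --- your write-up is a standard and valid justification of it.
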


\begin{lemma}[classical fundamental lemma, version 2]\label{CDBR2}
If $q_{1}$, $q_{2}\in \mathrm{L}^{1}$ and
$$ \displaystyle \int_{a}^{b} q_{1}(\t) \cdot h(\t) + q_{2}(\t) \cdot h^{\prime}(\t) \; d\t=0 $$
for any function $h\in \mathscr{C}_{c}^{\infty}$, then $q_{2} \in \mathrm{AC}$
(more precisely, it has an absolutely continuous representative) and $q_{2}^{\prime}(t)=q_{1}(t)$,  $t\in  [a,b] \text{ a.e.}$
\end{lemma}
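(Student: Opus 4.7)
The plan is to reduce to the first version of the fundamental lemma (Lemma~\ref{CDBR1}) via an integration by parts that transfers the derivative from $h$ to $q_{1}$. By testing only against functions of the form $h = h_{i} e_{i}$, where $e_{i}$ is the $i$-th standard basis vector of $\R^{m}$ and $h_{i}$ a scalar smooth function with compact support in $(a,b)$, one reduces at once to the case where $q_{1}$, $q_{2}$ and $h$ are scalar-valued. I would then set $Q_{1}(t) := \int_{a}^{t} q_{1}(\t)\, d\t$, which is absolutely continuous on $[a,b]$ with $Q_{1}' = q_{1}$ a.e. Since $h$ vanishes at the endpoints, classical integration by parts yields $\int_{a}^{b} q_{1} h = -\int_{a}^{b} Q_{1} h'$, so the hypothesis rewrites as
$$\int_{a}^{b} \bigl( q_{2}(\t) - Q_{1}(\t) \bigr)\, h'(\t)\, d\t = 0 \qquad \text{for every scalar } h \in \mathscr{C}_{c}^{\infty}. \quad (\star)$$

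The heart of the argument is to deduce from $(\star)$ that $q_{2} - Q_{1}$ is a.e.\ equal to some real constant. I would use the elementary fact that the image of $\mathscr{C}_{c}^{\infty}$ under the derivative operator is exactly the set of zero-mean test functions: if $\varphi \in \mathscr{C}_{c}^{\infty}$ satisfies $\int_{a}^{b} \varphi = 0$, then $h(t) := \int_{a}^{t} \varphi$ again has compact support in $(a,b)$, and conversely any $h'$ has zero mean. Thus $(\star)$ asserts precisely that $q_{2} - Q_{1}$ annihilates every zero-mean test function.

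I would then fix once and for all some $\varphi_{0} \in \mathscr{C}_{c}^{\infty}$ with $\int_{a}^{b} \varphi_{0} = 1$ and set $C := \int_{a}^{b} (q_{2} - Q_{1})\, \varphi_{0} \in \R$. For an arbitrary $\psi \in \mathscr{C}_{c}^{\infty}$, the decomposition $\psi = (\psi - c\varphi_{0}) + c\varphi_{0}$ with $c := \int_{a}^{b} \psi$ has a zero-mean first summand, and applying $(\star)$ to it yields
$$\int_{a}^{b} \bigl( q_{2}(\t) - Q_{1}(\t) - C \bigr)\, \psi(\t)\, d\t = 0 \qquad \text{for all } \psi \in \mathscr{C}_{c}^{\infty}.$$
Lemma~\ref{CDBR1} then forces $q_{2} = Q_{1} + C$ a.e., so $q_{2}$ admits the absolutely continuous representative $Q_{1} + C$, whose a.e.\ derivative is $q_{1}$. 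Reassembling components recovers the vector-valued conclusion. The only mildly delicate point I anticipate is checking that the antiderivative of a zero-mean $\varphi \in \mathscr{C}_{c}^{\infty}$ still has support inside $(a,b)$, but this is immediate in one dimension from the condition $\int_{a}^{b} \varphi = 0$ together with the fact that $\varphi$ already vanishes near the endpoints.
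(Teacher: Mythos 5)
Your proof is correct and complete: the reduction to the scalar case, the passage to $(\star)$ via integration by parts against the absolutely continuous antiderivative $Q_1$, the characterization of derivatives of test functions as zero-mean test functions, and the normalization trick with $\varphi_0$ to conclude via Lemma~\ref{CDBR1} that $q_2 - Q_1$ is a.e.\ constant are all sound, and the one delicate point you flag (that the antiderivative of a zero-mean test function again has compact support in $(a,b)$) is handled correctly. The paper states this lemma as a recalled classical fact without proof, so there is nothing to compare against; your argument is the standard du Bois--Reymond proof and would serve as a valid reference proof.
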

In the previous summarized strategy, a fractional integration by parts formula is used and Lemma~\ref{CDBR1} concludes. Recall that the use of a fractional integration by parts formula requires regularity hypotheses on $L_v (\cdot,q ,\mathrm{O}^\alpha_{a+} q ) $ and the existence of $\mathrm{O}^\alpha_{b-} ( L_v (\cdot,q,\mathrm{O}^\alpha_{a+} q) )$. Note that Lemma~\ref{CDBR2} is more interesting since we get a gain of regularity on $q_2$ and the existence of its derivative. In this paper, we prove a fractional variant of Lemma~\ref{CDBR2} (see Theorem~\ref{GDBR}) proving the regularity of $L_v (\cdot,q ,\mathrm{O}^\alpha_{a+} q ) $ and the existence of $\mathrm{O}^\alpha_{b-} ( L_v (\cdot,q,\mathrm{O}^\alpha_{a+} q) )$.

\paragraph*{Another application: an existence result.}
In \cite{MMRK}, the authors obtain the existence of extremals in the case of a fractional functional associated with a quadratic Lagrangian $L$. In \cite{Bourdin}, an existence result for solutions of general fractional Euler-Lagrange equations is investigated: the existence of a minimizer is obtained under suitable assumptions of regularity, coercivity and convexity. Similar methods were developed in \cite{Bourdin1,Bourdin2} in order to obtain existence results for minimizers of functionals of different forms and depending on different notions of fractional derivatives.

In this paper, we give an additional application of the fractional fundamental lemma (obtained in Theorem~\ref{GDBR}) and of the fractional integration by parts formula (obtained in Theorem~\ref{cze3}): we prove an existence result for solutions of linear fractional boundary value problems, see Theorem~\ref{thmexistence}. Our method is based on the Hilbert structure of $\mathrm{AC}^{\alpha,2}_{a+}$ and the classical Stampacchia theorem.

\paragraph*{Organization of the paper.}
The paper is organized as follows. In Section~\ref{section2} we give some basic recalls on fractional calculus and we prove a new fractional fundamental lemma (Theorem~\ref{GDBR}). In Section~\ref{section3}, from an integral representation of functions possessing fractional Riemann-Liouville derivatives, we derive a new fractional integration by parts formula (Theorem~\ref{cze3}). The two last sections are devoted to applications of the previous results to necessary optimality conditions for fractional Bolza functionals (Section~\ref{section4}, Theorem~\ref{TEL1}) as well as to existence of solutions for some linear boundary value problems (Section~\ref{section5}, Theorem~\ref{thmexistence}).

\section{A fractional fundamental lemma}\label{section2}

The aim of this section is to prove a fractional counterpart of Lemma \ref{CDBR2}. Section~\ref{section21} gives some basic recalls on fractional calculus and the main result is derived in Section~\ref{section22}.

\subsection{Preliminaries on fractional calculus}\label{section21}

By the left- and right-sided fractional Riemann-Liouville integrals of order $\alpha > 0$ of $q \in \mathrm{L}^{1}$ we mean the following functions
$$ (I_{a+}^{\alpha} q)(t):=\frac{1}{\Gamma(\alpha)}\int_{a}^{t} \frac{q(\tau)}{(t-\tau)^{1-\alpha}} \; d\tau, \quad t\in  [a,b]\text{ a.e.,} $$
$$ (I_{b-}^{\alpha}q)(t):=\frac{1}{\Gamma(\alpha)}\int_{t}^{b} \frac{q(\tau)}{(\tau-t)^{1-\alpha}} \; d\tau, \quad t \in [a,b] \text{ a.e.} $$
Recall that $I_{a+}^{\alpha}$, $I_{b-}^{\alpha}$ are linear continuous operator from $\mathrm{L}^p$ to $\mathrm{L}^p$ for any $1 \leq p \leq \infty$, see \cite[Theorem~2.6 p.48]{Samko}. The two following propositions have been proved respectively in \cite[Equation~(2.21) p.34]{Samko} and \cite[Corollary of Theorem~3.5 p.67]{Samko}.

\begin{proposition}\label{thmcomposition}
If $\alpha_1$, $\alpha_2 >0$, then $(I^{\alpha_1}_{a+}  I^{\alpha_2}_{a+} q)(t) = (I^{\alpha_1 + \alpha_2}_{a+} q)(t)$, $t \in [a,b] \text{ a.e.}$ for any $q \in \mathrm{L}^{1}$.
\end{proposition}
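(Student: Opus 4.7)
The plan is to unfold the definitions, swap the order of integration by Fubini, and recognize the resulting inner integral as a Beta integral.

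First I would write, for almost every $t \in [a,b]$,
\begin{equation*}
(I^{\alpha_1}_{a+} I^{\alpha_2}_{a+} q)(t) = \frac{1}{\Gamma(\alpha_1)\Gamma(\alpha_2)} \int_a^t (t-s)^{\alpha_1-1} \int_a^s (s-\tau)^{\alpha_2-1} q(\tau) \, d\tau \, ds,
\end{equation*}
and then attempt to interchange the two integrations to obtain
\begin{equation*}
\frac{1}{\Gamma(\alpha_1)\Gamma(\alpha_2)} \int_a^t q(\tau) \left( \int_\tau^t (t-s)^{\alpha_1-1}(s-\tau)^{\alpha_2-1} \, ds \right) d\tau.
\end{equation*}

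The substitution $s = \tau + u(t-\tau)$, $u \in [0,1]$, transforms the inner integral into $(t-\tau)^{\alpha_1+\alpha_2-1}$ times the Beta integral $\int_0^1 u^{\alpha_2-1}(1-u)^{\alpha_1-1} \, du = B(\alpha_2,\alpha_1) = \Gamma(\alpha_1)\Gamma(\alpha_2)/\Gamma(\alpha_1+\alpha_2)$. Putting this back gives exactly $(I^{\alpha_1+\alpha_2}_{a+} q)(t)$ by definition, so the only thing really to check is the Fubini step.

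The main obstacle, therefore, is justifying the Fubini--Tonelli application. For this I would verify that the nonnegative integrand $(t-s)^{\alpha_1-1}(s-\tau)^{\alpha_2-1}|q(\tau)|$ is integrable on the triangle $\{(s,\tau) : a \leq \tau \leq s \leq t\}$: performing the $s$-integral first yields the Beta-integral bound $(t-\tau)^{\alpha_1+\alpha_2-1} \Gamma(\alpha_1)\Gamma(\alpha_2)/\Gamma(\alpha_1+\alpha_2)$, and since $\alpha_1+\alpha_2 > 0$ this power is locally integrable, so the full iterated integral is bounded by a constant multiple of $(b-a)^{\alpha_1+\alpha_2-1}\| q\|_{\mathrm{L}^1}$ (or absorbed into a finite factor when $\alpha_1+\alpha_2 \geq 1$). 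This justifies the swap and, incidentally, reproves that $I^{\alpha_2}_{a+} q \in \mathrm{L}^1$ so that the outer application of $I^{\alpha_1}_{a+}$ is legitimate. The equality then holds for almost every $t \in [a,b]$, which is exactly the statement claimed.
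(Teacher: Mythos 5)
The paper offers no proof of this proposition at all: it is quoted directly from Samko--Kilbas--Marichev (Equation (2.21), p.~34), where it is established by exactly the computation you give, namely the Dirichlet interchange of the order of integration followed by the Beta-integral evaluation $\int_0^1 u^{\alpha_2-1}(1-u)^{\alpha_1-1}\,du = \Gamma(\alpha_1)\Gamma(\alpha_2)/\Gamma(\alpha_1+\alpha_2)$. So your proof supplies the standard argument that the paper delegates to its reference, and it is correct in substance. One small point in your Tonelli justification deserves repair: the bound ``$(b-a)^{\alpha_1+\alpha_2-1}\|q\|_{\mathrm{L}^1}$'' for the iterated integral of the absolute value is only valid when $\alpha_1+\alpha_2\geq 1$; when $\alpha_1+\alpha_2<1$ the kernel $(t-\tau)^{\alpha_1+\alpha_2-1}$ is unbounded near $\tau=t$ and the convolution $\int_a^t|q(\tau)|(t-\tau)^{\alpha_1+\alpha_2-1}\,d\tau$ need not be finite for \emph{every} $t$ (it equals $\Gamma(\alpha_1+\alpha_2)(I^{\alpha_1+\alpha_2}_{a+}|q|)(t)$, which can be $+\infty$ on a null set). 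The correct conclusion, and all that is needed since the proposition is an a.e.\ statement, is that this quantity is finite for almost every $t$; this follows by integrating once more in $t$ and noting
\begin{equation*}
\int_a^b\int_a^t |q(\tau)|\,(t-\tau)^{\alpha_1+\alpha_2-1}\,d\tau\,dt \leq \frac{(b-a)^{\alpha_1+\alpha_2}}{\alpha_1+\alpha_2}\,\|q\|_{\mathrm{L}^1}<\infty ,
\end{equation*}
so Fubini applies for a.e.\ $t$ and the rest of your computation goes through unchanged.
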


\begin{proposition}\label{czint}
If $\alpha>0$, $1\leq p \leq \infty$, $1\leq r \leq\infty$ and
$\frac{1}{p}+\frac{1}{r}\leq1+\alpha$ (additionally, we assume that $p>1$ and
$r>1$ when $\frac{1}{p}+\frac{1}{r}=1+\alpha$), then
$$ \displaystyle \int_{a}^{b} (I_{a+}^{\alpha}q_1)(\t) \cdot q_2(\t) \; d\t = \int_{a}^{b} q_1(\t) \cdot (I_{b-}^{\alpha}q_2)(\t) \; d\t $$
for any $q_1 \in \mathrm{L}^{p}$, $q_2 \in \mathrm{L}^{r}$.
\end{proposition}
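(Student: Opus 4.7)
The plan is to read the identity as a reshuffling of the iterated integral
$$ J := \frac{1}{\Gamma(\alpha)} \iint_{\{a < s < \t < b\}} \frac{q_1(s) \cdot q_2(\t)}{(\t - s)^{1 - \alpha}} \; ds \, d\t . $$
Performing the $s$-integration first (with $\t$ fixed) produces $\int_a^b (I_{a+}^{\alpha} q_1)(\t) \cdot q_2(\t) \, d\t$, while performing the $\t$-integration first (with $s$ fixed) produces $\int_a^b q_1(s) \cdot (I_{b-}^{\alpha} q_2)(s) \, ds$. The entire content of the proposition is therefore to justify Fubini's theorem, i.e.\ to prove that $J$ is absolutely convergent under the stated conditions on $\alpha$, $p$, $r$.

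First I would dispose of the trivial case $\alpha \geq 1$: on the bounded triangle the kernel satisfies $(\t-s)^{\alpha-1} \leq (b-a)^{\alpha-1}$, and $q_1, q_2 \in \mathrm{L}^1$ since $(a,b)$ is bounded, so absolute convergence is immediate. The substantive case is $0 < \alpha < 1$, where the kernel is singular along the diagonal $\t = s$.

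For $0 < \alpha < 1$, I would establish the boundedness
$$ I_{a+}^{\alpha} : \mathrm{L}^{p} \longrightarrow \mathrm{L}^{r'} , $$
where $r' = r/(r-1)$ is the H\"older conjugate of $r$. By H\"older's inequality this is exactly what is needed to bound $\int_a^b (I_{a+}^{\alpha} |q_1|)(\t) \cdot |q_2(\t)| \, d\t$, hence $J$, by $\| I_{a+}^{\alpha} |q_1| \|_{\mathrm{L}^{r'}} \| q_2 \|_{\mathrm{L}^{r}} < \infty$; Tonelli then promotes this to absolute convergence on the triangle and Fubini closes the argument. In the strict subcritical regime $1/p + 1/r < 1 + \alpha$, the required boundedness follows from Young's convolution inequality applied to the kernel $K(t) = t^{\alpha-1} \mathbf{1}_{(0,b-a)}(t)$, which lies in $\mathrm{L}^{q}$ for every $q < 1/(1-\alpha)$; on the bounded interval one may further reduce general exponents to the Young-admissible range via the embedding $\mathrm{L}^{p} \hookrightarrow \mathrm{L}^{p_0}$ for $p_0 \leq p$.

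The main obstacle is the critical equality $1/p + 1/r = 1 + \alpha$: there Young alone is not sufficient and one must invoke the Hardy--Littlewood--Sobolev inequality, which supplies the endpoint strong-type boundedness of $I_{a+}^{\alpha}$ precisely when $p, r \in (1, \infty)$. The exclusion of $p = 1$ and $r = 1$ at equality in the proposition is sharp and reflects the classical failure of the strong-type bound $\mathrm{L}^{1} \to \mathrm{L}^{1/(1-\alpha)}$ at the endpoint: only the weak-type Marcinkiewicz estimate is available there, and the absolute integrability of $J$ can genuinely fail. Once absolute convergence has been secured in every admissible case, Fubini yields the two iterated integrals and the proposition follows.
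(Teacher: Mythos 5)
Your proof is correct; note that the paper itself gives no proof of Proposition~\ref{czint}, quoting it from \cite[Corollary of Theorem~3.5 p.67]{Samko}, and your argument (write both sides as the double integral over the triangle $\{a<s<\tau<b\}$, verify absolute convergence, apply Fubini--Tonelli) is exactly the standard proof behind that citation, so there is nothing genuinely different to compare. The exponent bookkeeping also checks out: the mapping property $I^{\alpha}_{a+}:\mathrm{L}^{p}\to\mathrm{L}^{r'}$ under $\frac{1}{p}+\frac{1}{r}\leq 1+\alpha$ (with $p,r>1$ required at equality, where $p=\infty$ and $r=\infty$ are automatically excluded) is precisely the content of Proposition~\ref{nose} already recorded in the paper, so the portion of your argument re-deriving it from Young's inequality and the Hardy--Littlewood--Sobolev theorem could simply be replaced by a citation of that proposition.
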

For the following proposition, we refer to \cite{Hardy} (Property~1), \cite[Theorem~4 p.575]{HardyLittlewood} (Property~2) and \cite[Theorem~3.6 p.67]{Samko} (Property~4). Note that Property~3 can be easily derived from Property~2, see also discussions in \cite[p.578]{HardyLittlewood} and \cite[Paragraph~3.3 p.91]{Samko}. 

\begin{proposition}\label{nose}
Let $\alpha \in (0,1)$, $1 \leq p \leq \infty$ and $q \in \mathrm{L}^p$. The following statements are satisfied:
\begin{enumerate}
\item if $0 < \alpha < 1 = p$ then $I^{\alpha}_{a+} q \in \mathrm{L}^r$ for every $1 \leq r < \frac{1}{1-\alpha}$;
\item if $0 < \alpha < \frac{1}{p} < 1$ then $I^{\alpha}_{a+} q \in \mathrm{L}^r$ for every $1 \leq r \leq \frac{p}{1-\alpha p}$; 
\item if $0 < \alpha = \frac{1}{p} < 1$ then $I^{\alpha}_{a+} q \in \mathrm{L}^r$ for every $1 \leq r < \infty$;
\item if $0 \leq \frac{1}{p} < \alpha < 1$ then $I^{\alpha}_{a+} q \in \mathrm{L}^\infty$.
\end{enumerate}
Actually, in the last case, $I^\alpha_{a+} q$ can be identified to a H\"olderian continuous function with exponent $\alpha - \frac{1}{p}$ vanishing at $t=a$.
\end{proposition}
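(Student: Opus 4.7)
The plan is to unify the four cases by viewing $I^{\alpha}_{a+}$ as convolution with the truncated kernel $k_{\alpha}(s) = s^{\alpha-1}/\Gamma(\alpha) \cdot \mathbf{1}_{(0,b-a)}(s)$ and to select the right classical inequality for each range of parameters. First I would observe that $k_{\alpha} \in \mathrm{L}^{r}((0,b-a))$ precisely when $r(\alpha-1) > -1$, i.e.\ for $1 \leq r < \frac{1}{1-\alpha}$. Property~1 then follows directly from Young's convolution inequality $\mathrm{L}^{1} \ast \mathrm{L}^{r} \hookrightarrow \mathrm{L}^{r}$, applied to $q \in \mathrm{L}^{1}$ extended by zero outside $(a,b)$.

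Property~2 is the core analytic input: it is exactly the Hardy--Littlewood fractional integration theorem, stating that for $1 < p < 1/\alpha$ one has $\|I^{\alpha}_{a+} q\|_{\mathrm{L}^{p/(1-\alpha p)}} \leq C \|q\|_{\mathrm{L}^{p}}$. The inclusion in $\mathrm{L}^{r}$ for every $1 \leq r \leq \frac{p}{1-\alpha p}$ then follows from boundedness of $(a,b)$. Since the paper explicitly cites \cite[Theorem~4 p.575]{HardyLittlewood}, I would simply invoke this result; a self-contained proof would rest on the weak-type $(1, \frac{1}{1-\alpha})$ bound together with Marcinkiewicz interpolation, which is really the main obstacle.

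Property~3 I would deduce from Property~2 by perturbing the integrability exponent. Since $(a,b)$ is bounded, $q \in \mathrm{L}^{p} \subset \mathrm{L}^{\tilde p}$ for every $1 \leq \tilde p \leq p$. Fix $r < \infty$ and choose $\tilde p < p = 1/\alpha$ close enough to $p$ so that $\alpha < 1/\tilde p$ and $\tilde p/(1 - \alpha \tilde p) \geq r$; this is possible because the exponent $\tilde p/(1-\alpha \tilde p)$ blows up as $\tilde p \uparrow 1/\alpha$. Property~2 applied to $\tilde p$ then yields $I^{\alpha}_{a+} q \in \mathrm{L}^{\tilde p/(1-\alpha \tilde p)} \subset \mathrm{L}^{r}$.

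For Property~4, apply H\"older's inequality pointwise:
$$|(I^{\alpha}_{a+} q)(t)| \leq \frac{1}{\Gamma(\alpha)} \left( \int_{a}^{t} (t-\tau)^{(\alpha-1)p'} \; d\tau \right)^{1/p'} \|q\|_{\mathrm{L}^{p}},$$
and note that $(\alpha-1)p' > -1$ is equivalent to $\alpha > 1/p$, so the kernel integral converges and is bounded uniformly in $t$ by a constant times $(b-a)^{\alpha - 1/p}$. This gives the $\mathrm{L}^{\infty}$ bound and, taking $t \to a^{+}$, the vanishing at $t=a$. For the H\"older regularity with exponent $\alpha - 1/p$, I would write $(I^{\alpha}_{a+} q)(t+h) - (I^{\alpha}_{a+} q)(t)$, split the integration range at $[a,t]$ and $[t,t+h]$, and on $[a,t]$ use the elementary concavity estimate $|(t+h-\tau)^{\alpha-1} - (t-\tau)^{\alpha-1}| \leq h^{\alpha-1} \cdot \text{(something)}$ combined with a second H\"older application. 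The resulting computation is a standard but slightly delicate exercise, and is precisely the content of \cite[Theorem~3.6 p.67]{Samko}, which I would cite rather than reproduce.
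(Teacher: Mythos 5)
Your proposal is correct and follows essentially the same route as the paper, which itself offers no proof but defers to the same references: Hardy/Hardy--Littlewood for Properties 1--2, the derivation of Property 3 from Property 2 by lowering the exponent on a bounded interval, and Samko et al.\ for the H\"older continuity in Property 4. The elementary arguments you supply for the remaining parts (Young's inequality for Property 1, the pointwise H\"older estimate giving the $\mathrm{L}^\infty$ bound and the vanishing at $t=a$ in Property 4) are standard and correct.
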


\begin{remark}
Analogous of Propositions~\ref{thmcomposition} and \ref{nose} for the right-sided fractional integral hold true.
\end{remark}

\subsection{Main result}\label{section22}

Recall that a function $q : [a,b] \longrightarrow \R^m$ is absolutely continuous if and only if there exists a couple $(c,\varphi) \in \mathbb{R}^{m} \times \mathrm{L}^{1}$ such that
\begin{equation}\label{eqabscontclass}
q(t)= c+(I^1_{a+} \varphi )(t),\quad t\in [a,b].
\end{equation}
In this case, $c = q(a)$ and $\varphi(t) = q^{\prime}(t)$, $t\in [a,b] \text{ a.e.}$

\begin{remark}
A fractional counterpart of the integral representation \eqref{eqabscontclass} is given in Proposition~\ref{dziewiec}. 
\end{remark}
Recall the following definitions (see \cite[Definition~2.4 p.44]{Samko}):

\begin{definition}\label{defdef1}
We say that $q\in \mathrm{L}^{1}$ possesses a left-sided Riemann-Liouville derivative
$D_{a+}^{\alpha}q$ of order $\alpha\in(0,1)$ if the
function $I_{a+}^{1-\alpha}q$ has an absolutely continuous representative. In this case, $I_{a+}^{1-\alpha}q$ is identified to its absolutely continuous representative and $D_{a+}^{\alpha}q$ is defined by $D_{a+}^{\alpha}q := \frac{d}{dt}(I_{a+}^{1-\alpha}q)$.
\end{definition}

\begin{definition}\label{defdef2}
We say that $q\in \mathrm{L}^{1}$ possesses a right-sided
Riemann-Liouville derivative $D_{b-}^{\alpha}q$ of order $\alpha\in(0,1)$ if the function $I_{b-}^{1-\alpha}q$ has
an absolutely continuous representative. In this case, $I_{b-}^{1-\alpha}q$ is identified to its absolutely continuous representative and $D_{b-}^{\alpha}q$ is defined by $D_{b-}^{\alpha}q:=-\frac{d}{dt}(I_{b-}^{1-\alpha}q)$.
\end{definition}
Using the method of the proof presented in \cite[proof of Theorem 2]{Bourdin} we obtain the following fractional counterpart of Lemma \ref{CDBR2}.

\begin{theorem}[fractional fundamental lemma]\label{GDBR}
If $ \alpha \in (0,1)$, $q_{1}\in \mathrm{L}^{1}$, $q_{2}\in \mathrm{L}^{1}$ and
$$ \displaystyle\int_{a}^{b} q_1(\t) \cdot h(\t) + q_2(\t) \cdot (D^\alpha_{a+} h) (\t) \; d\t=0 $$
for any $h\in \mathscr{C}_{c}^{\infty}$, then $q_{2}$ possesses a right-sided fractional Riemann-Liouville derivative and
$(D_{b-}^{\alpha} q_{2})(t)= -q_{1}(t)$, $t\in [a,b]\text{ a.e.}$
\end{theorem}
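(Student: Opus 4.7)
The plan is to reduce the fractional identity to a classical one by moving all the fractional structure onto $q_2$, and then invoke Lemma~\ref{CDBR2} (or rather its $q_1=0$ specialization, the classical du Bois-Reymond lemma).

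First, I would transform the term $\int_a^b q_1 \cdot h \, d\tau$. Setting $Q_1(t) := (I^1_{a+} q_1)(t) = \int_a^t q_1(s) \, ds$, this is absolutely continuous with $Q_1(a) = 0$, so a classical integration by parts (with $h(a) = h(b) = 0$) gives $\int_a^b q_1 \cdot h \, d\tau = -\int_a^b Q_1 \cdot h' \, d\tau$. Next I would handle the fractional term. For $h \in \mathscr{C}_c^\infty$, since $h$ is smooth with $h(a) = 0$, one has $h = I^1_{a+} h'$, so by Proposition~\ref{thmcomposition}, $I^{1-\alpha}_{a+} h = I^{2-\alpha}_{a+} h' = I^1_{a+} (I^{1-\alpha}_{a+} h')$; since $I^{1-\alpha}_{a+} h'$ is continuous (using $h' \in \mathrm{L}^\infty$ and Proposition~\ref{nose}, case~4), we can differentiate to obtain $D^\alpha_{a+} h = I^{1-\alpha}_{a+} h'$.

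Then the fractional integration by parts formula (Proposition~\ref{czint}, applied with exponents $p=\infty$, $r=1$, $\alpha$ replaced by $1-\alpha$, so $\frac{1}{p}+\frac{1}{r}=1 < 2-\alpha$) yields
$$\int_a^b q_2 \cdot (D^\alpha_{a+} h) \, d\tau = \int_a^b q_2 \cdot (I^{1-\alpha}_{a+} h') \, d\tau = \int_a^b (I^{1-\alpha}_{b-} q_2) \cdot h' \, d\tau.$$
Combining with the reduction of the first term, the hypothesis becomes
$$\int_a^b \bigl( I^{1-\alpha}_{b-} q_2 - Q_1 \bigr) \cdot h' \, d\tau = 0 \quad \text{for every } h \in \mathscr{C}_c^\infty.$$
Both $I^{1-\alpha}_{b-} q_2$ and $Q_1$ belong to $\mathrm{L}^1$ (the former by $\mathrm{L}^1$-continuity of $I^{1-\alpha}_{b-}$, the latter by absolute continuity), so Lemma~\ref{CDBR2} applied with $q_1 = 0$ and $q_2 = I^{1-\alpha}_{b-} q_2 - Q_1$ produces a constant $c \in \R^m$ with $I^{1-\alpha}_{b-} q_2 = Q_1 + c$ a.e. Since the right-hand side is absolutely continuous, Definition~\ref{defdef2} gives that $q_2$ possesses a right-sided Riemann-Liouville derivative, and then $D^\alpha_{b-} q_2 = -\frac{d}{dt}(Q_1 + c) = -q_1$ a.e., which is the claimed identity.

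The routine ingredients (classical integration by parts, fractional integration by parts, invocation of Lemma~\ref{CDBR2}) are immediate; the only nontrivial point is the rewriting $D^\alpha_{a+} h = I^{1-\alpha}_{a+} h'$ for test functions, which is where the smoothness and boundary vanishing of $h$ are used. Everything else is bookkeeping of integrability exponents to justify Proposition~\ref{czint}.
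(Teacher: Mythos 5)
Your proof is correct and follows essentially the same route as the paper: identify $D^\alpha_{a+}h = I^{1-\alpha}_{a+}h'$ for test functions, transpose via Proposition~\ref{czint} so that $I^{1-\alpha}_{b-}q_2$ is paired with $h'$, and conclude with the classical lemma. The only (immaterial) difference is that the paper applies Lemma~\ref{CDBR2} directly to the pair $(q_1,\, I^{1-\alpha}_{b-}q_2)$, whereas you first absorb $q_1$ into $Q_1 = I^1_{a+}q_1$ by a classical integration by parts and then invoke the special case of Lemma~\ref{CDBR2} with vanishing first function.
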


\begin{proof}
For any $h\in \mathscr{C}_{c}^{\infty}$, $h(t)= (I_{a+}^{1}h^{\prime}) (t)$, $t \in [a,b]$. From Proposition~\ref{thmcomposition}, $(I^{1-\alpha}_{a+} h)(t) = (I^1_{a+}  I^{1-\alpha}_{a+} h^\prime)(t)$, $t \in [a,b]\text{ a.e.}$, where $I^{1-\alpha}_{a+} h^\prime \in \mathrm{L}^\infty \subset \mathrm{L}^1$. Thus, $I^{1-\alpha}_{a+} h$ has an absolutely continuous representative. Then, $h$ possesses a left-sided fractional Riemann-Liouville derivative given by $D^\alpha_{a+} h = I^{1-\alpha}_{a+} h^\prime \in \mathrm{L}^\infty$. Consequently, our assumption and Proposition~\ref{czint} imply
$$ \int_a^b q_1(\t) \cdot h(\t) + (I^{1-\alpha}_{b-} q_2)(\t) \cdot h^\prime (\t) \; d\t=0 $$
for any $h\in \mathscr{C}_{c}^{\infty}$. Lemma~\ref{CDBR2} concludes the proof.
\end{proof}
\begin{remark}
In the limit case of $\alpha=1$, the above lemma leads to the classical
fundamental lemma (Lemma~\ref{CDBR2}).
\end{remark}

\section{A fractional integration by parts formula}\label{section3}

As usually, let $I_{a+}^{\alpha}(\mathrm{L}^{p})$ and $I_{b-}^{\alpha}(\mathrm{L}^{p})$ respectively denote the ranges of the operators $I_{a+}^{\alpha}$, $I_{b-}^{\alpha}$ on $\mathrm{L}^p$ for any $1 \leq p \leq \infty$. 

\begin{remark}\label{rmk111}
In particular, if $q \in I_{a+}^{\alpha}(\mathrm{L}^{1})$ with $q = I^\alpha_{a+} \varphi$, $\varphi \in \mathrm{L}^1$, then $q$ possesses a left-sided Riemann-Liouville derivative given by $D_{a+}^{\alpha}q = \varphi$, see \cite[Theorem~2.4 p.44]{Samko}. Analogous result for the right-sided derivative holds true.
\end{remark}
From the previous remark and Proposition \ref{czint}, the following fractional integration by parts formula follows, see \cite[Corollary~2 p.46]{Samko}.

\begin{proposition}\label{czroz}
If $\alpha \in (0,1)$, $1\leq p \leq \infty$, $1\leq r \leq \infty$ and $\frac{1}{p}+\frac{1}{r}\leq1+\alpha$ (additionally, we assume that $p>1$ and $r>1$ when $\frac{1}{p}+\frac{1}{r}=1+\alpha$), then
$$ \displaystyle\int_{a}^{b} (D_{a+}^{\alpha}q_1)(\t) \cdot q_2(\t) \; d\t = \int_{a}^{b} q_1(\t) \cdot (D_{b-}^{\alpha}q_2)(\t) \; d\t $$
for any $q_1\in I_{a+}^{\alpha}(\mathrm{L}^{p})$, $q_2\in I_{b-}^{\alpha}(\mathrm{L}^{r})$.
\end{proposition}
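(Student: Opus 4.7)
The plan is to reduce Proposition~\ref{czroz} to the already-established Proposition~\ref{czint} by using the characterization of elements of $I_{a+}^{\alpha}(\mathrm{L}^{p})$ and $I_{b-}^{\alpha}(\mathrm{L}^{r})$ as fractional integrals of $\mathrm{L}^{p}$/$\mathrm{L}^{r}$ functions and by identifying the Riemann--Liouville derivatives with those underlying functions via Remark~\ref{rmk111}.

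First, since $q_{1}\in I_{a+}^{\alpha}(\mathrm{L}^{p})$ and $q_{2}\in I_{b-}^{\alpha}(\mathrm{L}^{r})$, I would write $q_{1}=I_{a+}^{\alpha}\varphi_{1}$ with $\varphi_{1}\in\mathrm{L}^{p}$ and $q_{2}=I_{b-}^{\alpha}\varphi_{2}$ with $\varphi_{2}\in\mathrm{L}^{r}$. Remark~\ref{rmk111} (and its right-sided analogue) then gives $D_{a+}^{\alpha}q_{1}=\varphi_{1}$ and $D_{b-}^{\alpha}q_{2}=\varphi_{2}$ almost everywhere on $[a,b]$. Substituting these identifications into the two sides of the claimed equality reduces the statement to
$$\int_{a}^{b}\varphi_{1}(\t)\cdot(I_{b-}^{\alpha}\varphi_{2})(\t)\,d\t=\int_{a}^{b}(I_{a+}^{\alpha}\varphi_{1})(\t)\cdot\varphi_{2}(\t)\,d\t.$$

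This is exactly Proposition~\ref{czint} applied to the pair $(\varphi_{1},\varphi_{2})\in\mathrm{L}^{p}\times\mathrm{L}^{r}$, whose hypotheses on $p$, $r$, $\alpha$ (including the strict inequalities $p,r>1$ in the borderline case $\frac{1}{p}+\frac{1}{r}=1+\alpha$) are precisely those we have assumed. So the identity follows at once, and along the way Proposition~\ref{czint} guarantees that both integrands are in $\mathrm{L}^{1}$, so there is no integrability issue to check separately.

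Honestly, there is no genuine obstacle here: the content of the proposition lies in Proposition~\ref{czint} together with the fact that $D_{a+}^{\alpha}\circ I_{a+}^{\alpha}$ is the identity on $\mathrm{L}^{p}$ (and likewise on the right-sided side), which is what Remark~\ref{rmk111} records. The only thing worth writing carefully is the chain of rewritings converting $D_{a+}^{\alpha}q_{1}$ and $D_{b-}^{\alpha}q_{2}$ into $\varphi_{1}$ and $\varphi_{2}$ so that Proposition~\ref{czint} can be invoked verbatim.
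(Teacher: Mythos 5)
Your argument is correct and is exactly the route the paper takes: it derives Proposition~\ref{czroz} from Remark~\ref{rmk111} (identifying $D_{a+}^{\alpha}q_{1}$ and $D_{b-}^{\alpha}q_{2}$ with the underlying $\mathrm{L}^{p}$ and $\mathrm{L}^{r}$ functions) together with Proposition~\ref{czint}. You have simply written out the reduction the paper leaves implicit.
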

The aim of this section is to extend Proposition~\ref{czroz} to all functions $q_1$, $q_2$ possessing fractional Riemann-Liouville derivatives. Our result will be valid under the additional assumptions $D^\alpha_{a+} q_1 \in \mathrm{L}^p$ and $D^\alpha_{b-} q_2 \in \mathrm{L}^r$ with $0 \leq \frac{1}{p} < \alpha < 1$ and $0 \leq \frac{1}{r} < \alpha < 1$.

An integral representation for functions possessing fractional Riemann-Liouville derivatives is given in Section~\ref{section31}. Section~\ref{section32} is devoted to the functional spaces $\mathrm{AC}^{\alpha,p}_{a+}$ and $\mathrm{AC}^{\alpha,p}_{b-}$. The main result is stated in Section~\ref{section33}.

\subsection{Integral representations}\label{section31}

Let us give a fractional counterpart of the integral representation \eqref{eqabscontclass}. 

\begin{proposition}[integral representation]\label{dziewiec}
Let $\alpha \in (0,1)$ and $q\in \mathrm{L}^{1}$. Then, $q$ has a left-sided Riemann-Liouville derivative $D_{a+}^{\alpha}q$ of
order $\alpha$ if and only if there exists a couple $(c,\varphi) \in \R^m \times \mathrm{L}^{1}$ such that
\begin{equation}\label{reprezentacja}
q(t)= \frac{1}{\Gamma(\alpha)}\frac{c}{(t-a)^{1-\alpha}} + (I^{\alpha}_{a+} \varphi )(t) ,\quad t \in [a,b]\text{ a.e.} 
\end{equation}
In this case, $c = (I_{a+}^{1-\alpha}q)(a)$ and $\varphi(t) = (D_{a+}^{\alpha}q)(t)$,  $t \in [a,b] \text{ a.e.}$
\end{proposition}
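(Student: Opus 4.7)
The plan is to prove both implications by applying either $I^\alpha_{a+}$ or $I^{1-\alpha}_{a+}$ to the relevant equality and exploiting the semigroup property (Proposition~\ref{thmcomposition}) together with the classical integral representation \eqref{eqabscontclass} of absolutely continuous functions. Note that since $\alpha \in (0,1)$, the function $t \mapsto (t-a)^{\alpha-1}$ is in $\mathrm{L}^1$, so every expression encountered is well-defined.

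For the direct implication, assume $q$ has a left-sided Riemann-Liouville derivative, so that $I^{1-\alpha}_{a+} q$ is absolutely continuous. By \eqref{eqabscontclass} with $c := (I^{1-\alpha}_{a+}q)(a)$ and $\varphi := D^{\alpha}_{a+}q \in \mathrm{L}^1$, we have $(I^{1-\alpha}_{a+}q)(t) = c + (I^1_{a+}\varphi)(t)$ on $[a,b]$. Applying $I^\alpha_{a+}$ to both sides and using Proposition~\ref{thmcomposition} (which gives $I^\alpha_{a+} I^{1-\alpha}_{a+}q = I^1_{a+}q$ and $I^\alpha_{a+} I^1_{a+}\varphi = I^{\alpha+1}_{a+}\varphi$) together with the explicit formula $I^\alpha_{a+}(1)(t) = \frac{(t-a)^\alpha}{\Gamma(\alpha+1)}$, one gets
\[
(I^1_{a+}q)(t) \;=\; \frac{c\,(t-a)^\alpha}{\Gamma(\alpha+1)} + (I^{\alpha+1}_{a+}\varphi)(t), \qquad t \in [a,b].
\]
Both sides are absolutely continuous on $[a,b]$ (the left-hand side because $q \in \mathrm{L}^1$, the polynomial term trivially, and $I^{\alpha+1}_{a+}\varphi = I^1_{a+}(I^\alpha_{a+}\varphi)$ because $I^\alpha_{a+}\varphi \in \mathrm{L}^1$), so differentiating a.e.\ yields exactly \eqref{reprezentacja}.

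For the converse, assume the representation \eqref{reprezentacja} holds. Apply $I^{1-\alpha}_{a+}$ to both sides. The second term gives $I^{1-\alpha}_{a+} I^\alpha_{a+}\varphi = I^1_{a+}\varphi$, which is absolutely continuous. For the singular term the key computation is
\[
I^{1-\alpha}_{a+}\!\left( \frac{1}{\Gamma(\alpha)(\cdot -a)^{1-\alpha}} \right)(t) \;=\; \frac{1}{\Gamma(\alpha)\Gamma(1-\alpha)} \int_a^t (t-s)^{-\alpha}(s-a)^{\alpha-1} \, ds \;=\; 1,
\]
via the substitution $s = a + u(t-a)$ and the identity $B(\alpha,1-\alpha)=\Gamma(\alpha)\Gamma(1-\alpha)$. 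Consequently $(I^{1-\alpha}_{a+}q)(t) = c + (I^1_{a+}\varphi)(t)$, which is absolutely continuous with value $c$ at $t=a$ and a.e.\ derivative $\varphi$. By Definition~\ref{defdef1}, $q$ possesses a left-sided Riemann-Liouville derivative, with $(I^{1-\alpha}_{a+}q)(a) = c$ and $D^\alpha_{a+}q = \varphi$ a.e.

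I expect the only nontrivial technical step to be the Beta-function computation that recovers the constant $c$ from the singular profile $(t-a)^{\alpha-1}/\Gamma(\alpha)$; everything else reduces to the semigroup law, the classical AC representation, and elementary differentiation. Care must also be taken to verify absolute continuity (rather than mere continuity) of each term before differentiating a.e., but this follows from $q,\varphi \in \mathrm{L}^1$ and the fact that $\alpha \in (0,1)$ ensures the singular term is itself integrable on $[a,b]$.
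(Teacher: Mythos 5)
Your proof is correct and follows essentially the same route as the paper's: both directions apply $I^{\alpha}_{a+}$ (respectively $I^{1-\alpha}_{a+}$) to the classical absolutely continuous representation \eqref{eqabscontclass} (respectively to \eqref{reprezentacja}), invoke the semigroup property of Proposition~\ref{thmcomposition}, and then differentiate. The only difference is that you spell out the Beta-function computation and the absolute-continuity checks that the paper leaves implicit, which is a welcome addition but not a change of method.
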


\begin{remark}
Note that the above fractional integral representation \eqref{reprezentacja} is given in \cite[Equality~(2.61) p.45]{Samko} as a necessary condition for functions possessing left-sided Riemann-Liouville derivative of order $\alpha \in (0,1)$ and the authors give a sketch of the proof, see the last sentence in the proof of \cite[Theorem~2.4 p.44]{Samko}. For sake of completeness, we give here a complete proof of this result and we prove that \eqref{reprezentacja} is also a sufficient condition. Moreover, the uniqueness of the couple $(c,\varphi)$ is stated.
\end{remark}

\begin{proof}[Proof of Proposition~\ref{dziewiec}]
Let us assume that $q \in \mathrm{L}^{1}$ has a left-sided Riemann-Liouville derivative $D_{a+}^{\alpha}q$. This
means that $I_{a+}^{1-\alpha}q$ is (identified to) an absolutely continuous function. From the integral representation \eqref{eqabscontclass}, there exists a couple $(c,\varphi) \in \R^m \times \mathrm{L}^{1}$ such that%
\begin{equation}\label{eq987}
(I_{a+}^{1-\alpha}q)(t)= c+(I^{1}_{a+} \varphi )(t),\quad t \in [a,b],
\end{equation}
with $(I_{a+}^{1-\alpha}q)(a) = c$ and $D_{a+}^{\alpha}q(t) = \frac{d}{dt} (I_{a+}^{1-\alpha}q)(t) = \varphi (t)$, $t \in [a,b]\text{ a.e.}$ From Proposition~\ref{thmcomposition} and applying $I_{a+}^{\alpha}$ on \eqref{eq987} we obtain
\begin{equation}\label{eq45654}
(I_{a+}^{1}q)(t)= (I^{\alpha}_{a+} c )(t) +(I^{1}_{a+}  I^{\alpha}_{a+} \varphi )(t),\quad t \in [a,b]\text{ a.e.}
\end{equation}
The result follows from the differentiation of \eqref{eq45654}. Now, let us assume that \eqref{reprezentacja} holds true. From Proposition~\ref{thmcomposition} and applying $I_{a+}^{1-\alpha}$ on \eqref{reprezentacja} we obtain
$$ (I_{a+}^{1-\alpha}q)(t)= c +(I^{1}_{a+} \varphi )(t),\quad t\in [a,b]\text{ a.e.} $$
and then, $I_{a+}^{1-\alpha}q$ has an absolutely continuous representative and $q$ has a left-sided Riemann-Liouville derivative $D_{a+}^{\alpha}q$. The proof is complete.
\end{proof}

Of course, in an analogous way one can prove:

\begin{proposition}
[integral representation]
Let $\alpha \in (0,1)$ and $q\in \mathrm{L}^{1}$. Then, $q$ has the right-sided
Riemann-Liouville derivative $D_{b-}^{\alpha}q$ of order $\alpha$ if and only if there exist a constant $d\in\mathbb{R}^{m}$
and a function $\psi \in \mathrm{L}^{1}$ such that
\begin{equation}\label{repr2}
q(t)= \frac{1}{\Gamma(\alpha)}\frac{d}{(b-t)^{1-\alpha}} + (I^{\alpha}_{b-} \psi )(t) ,\quad t\in [a,b]\text{ a.e.}
\end{equation}
In this case, $d = (I_{b-}^{1-\alpha}q)(b)$ and $\psi(t) = (D_{b-}^{\alpha}q)(t)$,  $t\in [a,b] \text{ a.e.}$
\end{proposition}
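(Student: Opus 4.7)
The plan is to mirror the proof of Proposition~\ref{dziewiec} in the right-sided setting, with each step replaced by its natural analog at the endpoint $b$. The right-sided version of Proposition~\ref{thmcomposition} (flagged in the remark following Proposition~\ref{nose}) will do most of the work.

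For the forward direction, I would begin from Definition~\ref{defdef2}: if $q \in \mathrm{L}^1$ possesses a right-sided Riemann-Liouville derivative $D^\alpha_{b-}q$, then $I^{1-\alpha}_{b-}q$ is identified with an absolutely continuous function on $[a,b]$. Writing the classical absolute continuity representation \eqref{eqabscontclass} anchored at the endpoint $b$ yields
$$ (I^{1-\alpha}_{b-}q)(t) = d + (I^1_{b-}\psi)(t), \quad t \in [a,b], $$
with $d := (I^{1-\alpha}_{b-}q)(b)$ and $\psi := D^\alpha_{b-}q$; the minus sign in the definition of $D^\alpha_{b-}$ combines with the reversal of the integration limits to make the identity consistent. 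Applying $I^\alpha_{b-}$ to both sides, the right-sided composition identity gives $I^1_{b-}q$ on the left and $I^1_{b-}(I^\alpha_{b-}\psi)$ on the second summand on the right, while the constant term becomes $d(b-t)^\alpha/\Gamma(\alpha+1)$ by direct computation. Differentiating in $t$ and simplifying then produces exactly \eqref{repr2}, together with the asserted formulas for $d$ and $\psi$.

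For the converse, assume \eqref{repr2} holds. Applying $I^{1-\alpha}_{b-}$ to both sides and invoking the composition identity, the term $I^{1-\alpha}_{b-}(I^\alpha_{b-}\psi)$ collapses to $I^1_{b-}\psi$, which is absolutely continuous. It remains to check that $I^{1-\alpha}_{b-}$ applied to the singular term $d/[\Gamma(\alpha)(b-t)^{1-\alpha}]$ equals the constant $d$: the substitution $\tau = t + s(b-t)$ reduces the relevant integral to $B(1-\alpha,\alpha) = \Gamma(1-\alpha)\Gamma(\alpha)$, which cancels the prefactor exactly. Hence $I^{1-\alpha}_{b-}q$ has an absolutely continuous representative, and $q$ possesses a right-sided Riemann-Liouville derivative by Definition~\ref{defdef2}.

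The only genuinely computational step is the Beta-function identity used in the converse, and it is entirely routine. The main pitfall is sign bookkeeping: $D^\alpha_{b-}$ is defined with a minus sign, differentiating $I^1_{b-}$ produces another minus sign, and the two have to cancel cleanly to deliver \eqref{repr2} with the correct sign. This tedious cross-checking is precisely why the authors permit themselves to abbreviate the argument by saying ``in an analogous way one can prove.''
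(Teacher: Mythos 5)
Your proposal is correct and is precisely the argument the paper intends: the paper gives no separate proof for the right-sided case, stating only that it follows ``in an analogous way'' from the proof of Proposition~\ref{dziewiec}, and your step-by-step mirroring (including the sign cancellation between $D^\alpha_{b-}=-\frac{d}{dt}I^{1-\alpha}_{b-}$ and the endpoint-$b$ anchoring of the absolute continuity representation, and the Beta-function computation $B(1-\alpha,\alpha)=\Gamma(1-\alpha)\Gamma(\alpha)$ for the singular term) is exactly that analogy carried out correctly.
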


\subsection{Functional spaces $\mathrm{AC}_{a+}^{\alpha,p}$ and $\mathrm{AC}_{b-}^{\alpha,p}$}\label{section32}

By analogy with the set $\mathrm{AC}$ having the integral representation \eqref{eqabscontclass}, we introduce the following fractional counterpart $\mathrm{AC}_{a+}^{\alpha,p}$ as the set of all functions that have integral representation~\eqref{reprezentacja} with $\varphi\in \mathrm{L}^{p}$. We similarly introduce $\mathrm{AC}_{b-}^{\alpha,p}$ as the set of all functions that have integral representation~\eqref{repr2} with $\psi\in \mathrm{L}^{p}$.

\begin{definition}\label{defdef3}
For every $\alpha \in (0,1)$ and every $1 \leq p \leq \infty$, we denote by $\mathrm{AC}_{a+}^{\alpha,p}:=\mathrm{AC}_{a+}^{\alpha,p}([a,b],\mathbb{R}^{m})$ the set of all functions $q \in \mathrm{L}^1$ that have a left-sided Riemann-Liouville derivative $D_{a+}^{\alpha}q \in \mathrm{L}^p$. 
\end{definition}

\begin{definition}\label{defdef4}
For every $\alpha \in (0,1)$ and every $1 \leq p \leq \infty$, we denote by $\mathrm{AC}_{b-}^{\alpha,p}:=\mathrm{AC}_{b-}^{\alpha,p}([a,b],\mathbb{R}%
^{m})$ the set of all functions $q \in \mathrm{L}^1$ that have a right-sided Riemann-Liouville derivative $D_{b-}^{\alpha}q \in \mathrm{L}^p$. 
\end{definition}
The following proposition gives an integrability result for functions that belong to $\mathrm{AC}_{a+}^{\alpha,p}$ and $\mathrm{AC}_{b-}^{\alpha,p}$.

\begin{proposition}\label{remlr}
Let $\alpha \in (0,1)$ and $1 \leq p \leq \infty$. Then, the inclusions $\mathrm{AC}_{a+}^{\alpha,p} \subset \mathrm{L}^r$ and $\mathrm{AC}_{b-}^{\alpha,p} \subset \mathrm{L}^r$ hold for any $1 \leq r < \frac{1}{1-\alpha}$.
\end{proposition}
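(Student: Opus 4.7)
The plan is to use the integral representation of Proposition~\ref{dziewiec}. Given $q \in \mathrm{AC}_{a+}^{\alpha,p}$, Proposition~\ref{dziewiec} provides a unique $(c,\varphi) \in \R^m \times \mathrm{L}^1$ with $\varphi = D_{a+}^\alpha q$; by definition of $\mathrm{AC}_{a+}^{\alpha,p}$ one has $\varphi \in \mathrm{L}^p$, and
$$ q(t)=\frac{1}{\Gamma(\alpha)}\,\frac{c}{(t-a)^{1-\alpha}} + (I_{a+}^\alpha \varphi)(t), \quad t\in[a,b] \text{ a.e.} $$
It therefore suffices to show that each of these two summands belongs to $\mathrm{L}^r$ for every $1\le r<\frac{1}{1-\alpha}$.

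For the singular term, a direct computation shows $\int_a^b (t-a)^{-r(1-\alpha)}\,dt<\infty$ if and only if $r(1-\alpha)<1$, which is exactly the hypothesis $r<\frac{1}{1-\alpha}$. So the first summand lies in $\mathrm{L}^r$ for precisely the claimed range.

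For the second summand, I would simply apply Proposition~\ref{nose} case by case. If $p=1$, Property~1 gives $I_{a+}^\alpha\varphi\in\mathrm{L}^r$ for all $1\le r<\frac{1}{1-\alpha}$, which is the full desired range. If $1<p<\frac{1}{\alpha}$, Property~2 gives $I_{a+}^\alpha\varphi\in\mathrm{L}^r$ for $1\le r\le\frac{p}{1-\alpha p}$; the elementary algebraic check
$$ \frac{p}{1-\alpha p}-\frac{1}{1-\alpha}=\frac{p-1}{(1-\alpha p)(1-\alpha)}\ge 0 $$
shows that $\frac{1}{1-\alpha}\le\frac{p}{1-\alpha p}$, so again all $r<\frac{1}{1-\alpha}$ are covered. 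If $p=\frac{1}{\alpha}$, Property~3 gives $I_{a+}^\alpha\varphi\in\mathrm{L}^r$ for all finite $r$. Finally, if $p>\frac{1}{\alpha}$ (including $p=\infty$), Property~4 yields $I_{a+}^\alpha\varphi\in\mathrm{L}^\infty\subset\mathrm{L}^r$ for every $r$. Combining the two summands gives $q\in\mathrm{L}^r$ for every $1\le r<\frac{1}{1-\alpha}$.

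The right-sided inclusion $\mathrm{AC}_{b-}^{\alpha,p}\subset\mathrm{L}^r$ follows by the same argument applied to the analogous integral representation \eqref{repr2} together with the right-sided analog of Proposition~\ref{nose} mentioned in the remark following that proposition. There is no genuine obstacle in the proof; the only thing requiring care is the bookkeeping across the four cases of Proposition~\ref{nose}, in particular verifying the inequality $\frac{1}{1-\alpha}\le\frac{p}{1-\alpha p}$ in the intermediate range of $p$.
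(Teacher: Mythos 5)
Your proof is correct and follows essentially the same route as the paper: decompose $q$ via the integral representation \eqref{reprezentacja}, check the singular term directly, and apply Proposition~\ref{nose} to $I_{a+}^{\alpha}\varphi$. The only difference is that your case analysis over $p$ is unnecessary — since $\varphi \in \mathrm{L}^p \subset \mathrm{L}^1$ on the bounded interval $(a,b)$, Property~1 of Proposition~\ref{nose} alone already gives $I_{a+}^{\alpha}\varphi \in \mathrm{L}^r$ for the full claimed range $1 \leq r < \frac{1}{1-\alpha}$, which is all the paper uses.
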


\begin{proof}
Recall that $I^\alpha_{a+} \varphi \in \mathrm{L}^r$ for any $\varphi \in \mathrm{L}^1$ and any $1 \leq r < \frac{1}{1-\alpha}$, see Proposition~\ref{nose}. Then the inclusion $\mathrm{AC}_{a+}^{\alpha,p} \subset \mathrm{L}^r$ easily follows from the integral representation~\eqref{reprezentacja}. The inclusion $\mathrm{AC}_{b-}^{\alpha,p} \subset \mathrm{L}^r$ can be analogously derived.
\end{proof}

The following proposition gives an explicit class of functions that belong to $\mathrm{AC}_{a+}^{\alpha,p}$ and $\mathrm{AC}_{b-}^{\alpha,p}$ in the case where $\alpha \in (0,1)$ and $1 \leq p < \frac{1}{\alpha}$.

\begin{proposition}\label{propabsdansabs}
Let $\alpha \in (0,1)$. The inclusion $\mathrm{AC} \subset \mathrm{AC}_{a+}^{\alpha,p} \cap \mathrm{AC}_{b-}^{\alpha,p}$ holds provided that $1 \leq p < \frac{1}{\alpha}$.
\end{proposition}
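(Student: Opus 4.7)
The plan is to take $q \in \mathrm{AC}$, exploit the classical integral representation~\eqref{eqabscontclass} to write $q(t) = q(a) + (I^1_{a+}q')(t)$ with $q' \in \mathrm{L}^1$, and then compute $I^{1-\alpha}_{a+}q$ explicitly. By linearity of $I^{1-\alpha}_{a+}$ and the composition rule of Proposition~\ref{thmcomposition}, I would obtain
$$(I^{1-\alpha}_{a+}q)(t) = \frac{q(a)\,(t-a)^{1-\alpha}}{\Gamma(2-\alpha)} + (I^{2-\alpha}_{a+}q')(t), \quad t \in [a,b] \text{ a.e.},$$
where the first summand comes from $I^{1-\alpha}_{a+}$ applied to the constant $q(a)$, and the second from $I^{1-\alpha}_{a+} I^{1}_{a+} q' = I^{2-\alpha}_{a+} q'$.

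Next I would verify that this expression has an absolutely continuous representative. The first summand is absolutely continuous since $(t-a)^{1-\alpha} = (1-\alpha)\int_a^t (s-a)^{-\alpha}\,ds$ and the function $s \mapsto (s-a)^{-\alpha}$ is integrable on $[a,b]$ because $\alpha < 1$. For the second summand I would rewrite $I^{2-\alpha}_{a+} q' = I^1_{a+}(I^{1-\alpha}_{a+} q')$ via Proposition~\ref{thmcomposition}, and since $I^{1-\alpha}_{a+}q' \in \mathrm{L}^1$ (because $I^{1-\alpha}_{a+}$ is continuous from $\mathrm{L}^1$ to $\mathrm{L}^1$), this term is absolutely continuous by~\eqref{eqabscontclass}. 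Hence $I^{1-\alpha}_{a+}q$ has an absolutely continuous representative, so $q$ possesses a left-sided Riemann-Liouville derivative given by
$$(D^\alpha_{a+}q)(t) = \frac{q(a)\,(t-a)^{-\alpha}}{\Gamma(1-\alpha)} + (I^{1-\alpha}_{a+}q')(t), \quad t \in [a,b] \text{ a.e.}$$

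The last step, which is the genuine content of the statement, is to verify that this derivative belongs to $\mathrm{L}^p$ under the condition $1 \leq p < 1/\alpha$. For the singular term $(t-a)^{-\alpha}$, the condition $p < 1/\alpha$ is exactly what ensures $\int_a^b (t-a)^{-\alpha p}\,dt < \infty$, so this is where the hypothesis is used in a sharp way. For the second term, Proposition~\ref{nose}(1) applied to $q' \in \mathrm{L}^1$ with exponent $1-\alpha$ gives $I^{1-\alpha}_{a+}q' \in \mathrm{L}^r$ for every $1 \leq r < 1/\alpha$, which again accommodates every $p$ in the prescribed range. Combining both, $D^\alpha_{a+} q \in \mathrm{L}^p$, so $q \in \mathrm{AC}^{\alpha,p}_{a+}$. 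The inclusion into $\mathrm{AC}^{\alpha,p}_{b-}$ follows by a symmetric argument using the right-sided analogues of Propositions~\ref{thmcomposition} and~\ref{nose}. The main obstacle is really just keeping track of the two sharp exponent conditions (one from power integrability, one from the Hardy-Littlewood bound) and observing that both coincide at $p < 1/\alpha$; there is no deeper difficulty.
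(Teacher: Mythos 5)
Your argument is correct and follows essentially the same route as the paper: decompose $q$ via the classical representation~\eqref{eqabscontclass}, write $I^{1-\alpha}_{a+}q$ as $I^1_{a+}$ of the sum $\frac{q(a)}{\Gamma(1-\alpha)(\cdot-a)^{\alpha}}+I^{1-\alpha}_{a+}q'$, and check that both summands lie in $\mathrm{L}^p$ for $1\leq p<\frac{1}{\alpha}$ using the elementary power integrability and Proposition~\ref{nose}. The only difference is cosmetic: you spell out the absolute continuity of the two pieces separately before identifying the derivative, whereas the paper regroups them under a single $I^1_{a+}$ at once.
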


\begin{proof}
Let $q \in \mathrm{AC}$ given by \eqref{eqabscontclass}. Then
\begin{eqnarray*}
(I^{1-\alpha}_{a+} q) (t) & = & \dfrac{c}{(1-\alpha)\Gamma (1-\alpha)} (t-a)^{1-\alpha} + I^{1}_{a+} ( I^{1-\alpha}_{a+} \varphi ) (t), \\
& = & I^{1}_{a+} \left( \dfrac{c}{\Gamma (1-\alpha) (\cdot-a)^{\alpha}} +   I^{1-\alpha}_{a+} \varphi  \right) (t),
\end{eqnarray*}
for a.e. $t \in [a,b]$. Since $\varphi \in \mathrm{L}^1$, $I^{1-\alpha}_{a+} \varphi  \in \mathrm{L}^p$ for any $1 \leq p < \frac{1}{\alpha}$, see Proposition~\ref{nose}. Note that the function $\frac{c}{\Gamma (1-\alpha) (\cdot-a)^{\alpha}}$ also belongs to $\mathrm{L}^p$ for any $1 \leq p < \frac{1}{\alpha}$. Then, $q$ possesses a left-sided fractional Riemann-Liouville derivative of order $\alpha \in (0,1)$ given by $ D_{a+}^{\alpha}q = \frac{c}{\Gamma (1-\alpha) (\cdot-a)^{\alpha}} + I^{1-\alpha}_{a+} \varphi $. Hence, $D_{a+}^{\alpha}q  \in \mathrm{L}^p$ for any $1 \leq p < \frac{1}{\alpha}$. A similar proof states that $D_{b-}^{\alpha} q  \in \mathrm{L}^p$ for any $1 \leq p < \frac{1}{\alpha}$.
\end{proof}

The following proposition gives another example of explicit functions that belong to $\mathrm{AC}_{a+}^{\alpha,p}$ and $\mathrm{AC}_{b-}^{\alpha,p}$ for some $1 \leq p \leq \infty$ satisfying $0 \leq \frac{1}{p} < \alpha < 1$.

\begin{proposition}\label{propabsdansabs2}
Let us consider:
$$ \mathrm{F} = \{ q \in \mathrm{AC} \; | \; q(a) =0 \text{ and } q' \in \mathrm{L}^r \text{ for some }  1< r \leq \infty \}. $$
Then the inclusion
$$  \mathrm{F} \subset \bigcup_{\substack{1 \leq p \leq \infty \\ 0 \leq \frac{1}{p} < \alpha}}  \mathrm{AC}_{a+}^{\alpha,p} $$
holds true for any $\alpha \in (0,1)$.
\end{proposition}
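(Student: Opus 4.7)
The plan is to combine the integral representation of absolutely continuous functions with the boundary condition $q(a)=0$, which kills the singular term appearing in the proof of Proposition~\ref{propabsdansabs}, and then to bound the resulting fractional integral of $q'$ via Proposition~\ref{nose}.

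Concretely, let $q\in\mathrm{F}$. Since $q\in\mathrm{AC}$ with $q(a)=0$, the representation \eqref{eqabscontclass} gives $q = I^1_{a+} q'$. Applying $I^{1-\alpha}_{a+}$ and invoking Proposition~\ref{thmcomposition} yields $I^{1-\alpha}_{a+} q = I^1_{a+}(I^{1-\alpha}_{a+}q')$ a.e., and since $q'\in\mathrm{L}^1$ forces $I^{1-\alpha}_{a+}q'\in\mathrm{L}^1$, the right-hand side is absolutely continuous. Hence $q$ has a left-sided Riemann--Liouville derivative and $D^\alpha_{a+} q = I^{1-\alpha}_{a+} q'$ a.e.; this is exactly the identity from the proof of Proposition~\ref{propabsdansabs} with the $c = q(a) = 0$ contribution suppressed.

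It then suffices to exhibit an exponent $p$ with $1 \leq p \leq \infty$, $1/p < \alpha$, and $I^{1-\alpha}_{a+} q' \in \mathrm{L}^p$. I would split according to the position of $1/r$ with respect to $1-\alpha$. If $1/r < 1-\alpha$ (in particular if $r=\infty$), Property~4 of Proposition~\ref{nose} applied with exponent $1-\alpha$ gives $I^{1-\alpha}_{a+} q' \in \mathrm{L}^\infty$, so $p=\infty$ works. If $1/r = 1-\alpha$, Property~3 produces membership in every $\mathrm{L}^p$ with $p<\infty$, so any $p > 1/\alpha$ does the job. If $1/r > 1-\alpha$, Property~2 gives $I^{1-\alpha}_{a+} q' \in \mathrm{L}^{p_0}$ with $p_0 = r/(1-(1-\alpha)r)$, and the required inequality $p_0 > 1/\alpha$ reduces, after a one-line manipulation, to $\alpha r + (1-\alpha)r > 1$, i.e.\ $r > 1$.

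The only genuine subtlety --- and the step I would anticipate as the main obstacle --- is this last inequality: it is the strict assumption $1 < r$ in the definition of $\mathrm{F}$, rather than the weaker $r \geq 1$, that forces $1/p_0 < \alpha$ strictly. Identifying that this hypothesis is used non-trivially precisely in the borderline sub-case $1/r > 1-\alpha$ is the only point where one must be careful; the rest of the argument is a direct consequence of Propositions~\ref{thmcomposition} and \ref{nose} together with the vanishing of $q$ at $a$.
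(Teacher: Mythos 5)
Your proof is correct and follows essentially the same route as the paper: write $q = I^1_{a+}q'$ using $q(a)=0$, deduce $D^\alpha_{a+}q = I^{1-\alpha}_{a+}q'$, and run the same three-way case analysis on the position of $\frac{1}{r}$ relative to $1-\alpha$ via Proposition~\ref{nose}, with the strict hypothesis $r>1$ doing the work in the case $\frac{1}{r}>1-\alpha$. The only cosmetic difference is that you verify the existence of $D^\alpha_{a+}q$ from Definition~\ref{defdef1} directly, whereas the paper reads it off from the factorization $q = I^\alpha_{a+}(I^{1-\alpha}_{a+}q')$ and Remark~\ref{rmk111}.
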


\begin{proof}
Let $\alpha \in (0,1)$ and let $q \in \mathrm{F}$. From \eqref{eqabscontclass}, $q = I^1_{a+} \varphi$ with $\varphi \in \mathrm{L}^r$ for some $r > 1$. Then $q = I^{\alpha}_{a+} (I^{1-\alpha}_{a+} \varphi)$ and $q$ possesses a left-sided Riemann-Liouville derivative given by $D^{\alpha}_{a+} q = I^{1-\alpha}_{a+}  \varphi$, see Remark~\ref{rmk111}. In the case where $0 < 1-\alpha < \frac{1}{r} < 1$, Proposition~\ref{nose} gives $D^{\alpha}_{a+} q \in \mathrm{L}^{p}$ with $p=\frac{r}{1-(1-\alpha)r}$ satisfying $\frac{1}{p} < \alpha$. In the case where $\frac{1}{r} = 1 - \alpha$ then Proposition~\ref{nose} gives $D^{\alpha}_{a+} q \in \mathrm{L}^{p}$ for every $1 \leq p < \infty$, so in particular for some $p$ satisfying $\frac{1}{p} < \alpha$. In the case where $0 \leq \frac{1}{r} < 1-\alpha < 1$, Proposition~\ref{nose} gives $D^{\alpha}_{a+} q \in \mathrm{L}^{p}$ with $p=\infty$ satisfying $\frac{1}{p} < \alpha$. As a conclusion, in all cases the inclusion $\mathrm{F} \subset \mathrm{AC}_{a+}^{\alpha,p}$ is proved for some $1 \leq p \leq \infty$ such that $0 \leq \frac{1}{p} < \alpha < 1$.
\end{proof}

\begin{remark}
The right-sided analogous proposition can be similarly derived.
\end{remark}

In what follows, we are specially interested in the case $0 \leq \frac{1}{p} < \alpha < 1$. In such a case, for any $\varphi \in \mathrm{L}^p$, $I^\alpha_{a+} \varphi$ can be identified to a continuous function vanishing at $t=a$, see Proposition~\ref{nose}. Similarly, $I^\alpha_{b-} \psi$ can be identified to a continuous function vanishing at $t=b$ for any $\psi \in \mathrm{L}^p$.

In the sequel, in the case $0 \leq \frac{1}{p} < \alpha < 1$, we consider the following pointwise identification for every $q \in  \mathrm{AC}_{a+}^{\alpha,p}$:
\begin{equation}\label{identificationleft}
q(t) = \frac{1}{\Gamma(\alpha)}\frac{c}{(t-a)^{1-\alpha}} + (I^{\alpha}_{a+} \varphi )(t) ,\quad t\in (a,b],
\end{equation}
where $ \varphi = D^\alpha_{a+} q \in \mathrm{L}^p$ and $c = (I^{1-\alpha}_{a+} q)(a)$. Hence, $q$ is well-defined at $t=b$. The analogous pointwise identification is considered for every $q \in  \mathrm{AC}_{b-}^{\alpha,p}$:
\begin{equation}\label{identificationright}
q(t) = \frac{1}{\Gamma(\alpha)}\frac{d}{(b-t)^{1-\alpha}} + (I^{\alpha}_{b-} \psi )(t) ,\quad t\in [a,b),
\end{equation}
where $ \psi = D^\alpha_{b-} q \in \mathrm{L}^p$ and $d = (I^{1-\alpha}_{b-} q)(b)$. Hence, $q$ is well-defined at $t=a$.

\subsection{Main result}\label{section33}

The following theorem on the integration by parts for fractional Riemann-Liouville derivatives holds.

\begin{theorem}[fractional integration by parts formula]\label{cze3}
If $0 \leq \frac{1}{p} < \alpha < 1$ and $0 \leq \frac{1}{r} < \alpha < 1$, then
\begin{multline}\label{eq5555}
\int_a^b (D^{\alpha}_{a+} q_1) (\t) \cdot q_2(\t) \; d\t = \int_a^b q_1(\t) \cdot D^\alpha_{b-} q_2(\t) \; d\t \\ + q_1(b) \cdot (I_{b-}^{1-\alpha} q_2)(b)  -(I_{a+}^{1-\alpha} q_1)(a) \cdot q_2(a)
\end{multline}
for any $q_1 \in \mathrm{AC}_{a+}^{\alpha,p}$, $q_2 \in \mathrm{AC}_{b-}^{\alpha,r}$.
\end{theorem}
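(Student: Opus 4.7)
The plan is to reduce the claim to Proposition~\ref{czint} via the two integral representations provided by Proposition~\ref{dziewiec} and its right-sided analog. Write
\[
q_1(t) = \frac{1}{\Gamma(\alpha)}\frac{c_1}{(t-a)^{1-\alpha}} + (I^\alpha_{a+}\varphi_1)(t), \qquad q_2(t) = \frac{1}{\Gamma(\alpha)}\frac{d_2}{(b-t)^{1-\alpha}} + (I^\alpha_{b-}\psi_2)(t),
\]
with $c_1 = (I^{1-\alpha}_{a+}q_1)(a)$, $\varphi_1 = D^\alpha_{a+}q_1 \in \mathrm{L}^p$, $d_2 = (I^{1-\alpha}_{b-}q_2)(b)$, $\psi_2 = D^\alpha_{b-}q_2 \in \mathrm{L}^r$. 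The pointwise identifications \eqref{identificationleft}--\eqref{identificationright} guarantee that $q_1(b)$ and $q_2(a)$ are meaningful, since under $\frac{1}{p}<\alpha$ and $\frac{1}{r}<\alpha$ Proposition~\ref{nose} (item~4) ensures that $I^\alpha_{a+}\varphi_1$ and $I^\alpha_{b-}\psi_2$ are continuous on $[a,b]$.

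Next, plug the representation of $q_2$ into the left-hand side of \eqref{eq5555} to split it as $A+B$ with
\[
A = \int_a^b \varphi_1(\t)\cdot \frac{d_2}{\Gamma(\alpha)(b-\t)^{1-\alpha}}\,d\t, \qquad B = \int_a^b \varphi_1(\t)\cdot (I^\alpha_{b-}\psi_2)(\t)\,d\t.
\]
Recognize that $A = d_2\cdot (I^\alpha_{a+}\varphi_1)(b)$ directly from the definition of $I^\alpha_{a+}$ at $t=b$, and then substitute $(I^\alpha_{a+}\varphi_1)(b) = q_1(b) - c_1/[\Gamma(\alpha)(b-a)^{1-\alpha}]$. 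For $B$, verify the exponent condition for Proposition~\ref{czint}: since $\frac{1}{p}<\alpha$ and $\frac{1}{r}<1$ we have $\frac{1}{p}+\frac{1}{r}<1+\alpha$ strictly, so the proposition applies and yields $B = \int_a^b (I^\alpha_{a+}\varphi_1)(\t)\cdot \psi_2(\t)\,d\t$. Insert $I^\alpha_{a+}\varphi_1 = q_1 - c_1/[\Gamma(\alpha)(\cdot-a)^{1-\alpha}]$ and recognize the resulting singular term as $c_1\cdot (I^\alpha_{b-}\psi_2)(a)$, again by the definition of $I^\alpha_{b-}$ at $t=a$.

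Adding $A$ and $B$, the four pieces reorganize as
\[
\int_a^b q_1(\t)\cdot \psi_2(\t)\,d\t + q_1(b)\cdot d_2 - c_1\cdot\!\left[\frac{d_2}{\Gamma(\alpha)(b-a)^{1-\alpha}} + (I^\alpha_{b-}\psi_2)(a)\right],
\]
and the bracketed expression is precisely $q_2(a)$ by \eqref{identificationright}. Since $\psi_2 = D^\alpha_{b-}q_2$, $c_1 = (I^{1-\alpha}_{a+}q_1)(a)$ and $d_2 = (I^{1-\alpha}_{b-}q_2)(b)$, this is exactly \eqref{eq5555}. The only delicate points are (i) checking the strict exponent inequality so that Proposition~\ref{czint} is applicable without the borderline $p,r>1$ hypothesis, and (ii) ensuring that the apparently singular integrals defining $A$ and the $c_1$-term in $B$ genuinely converge; both are immediate from $\varphi_1\in\mathrm{L}^p$, $\psi_2\in\mathrm{L}^r$ with $\frac{1}{p},\frac{1}{r}<\alpha$ and the fact that $(b-\cdot)^{\alpha-1},(\cdot-a)^{\alpha-1}$ belong to $\mathrm{L}^{p'}$ and $\mathrm{L}^{r'}$ respectively. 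No other analytic obstacle arises.
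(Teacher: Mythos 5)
Your proposal is correct and follows essentially the same route as the paper: substitute the integral representations of $q_1$ and $q_2$, apply Proposition~\ref{czint} to the term $\int_a^b \varphi_1\cdot (I^\alpha_{b-}\psi_2)\,d\tau$, and identify the leftover singular integrals as the endpoint values $(I^\alpha_{a+}\varphi_1)(b)$ and $(I^\alpha_{b-}\psi_2)(a)$. The only cosmetic difference is that the paper evaluates both sides of \eqref{eq5555} separately and subtracts, whereas you transform the left-hand side directly into the right-hand side; the integrability and exponent checks you make are the same ones the paper relies on.
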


\begin{proof}
Let $q_1$, $q_2$ be given by \eqref{identificationleft}, \eqref{identificationright} with $c\in\mathbb{R}^{m}$, $\varphi\in \mathrm{L}^{p}$ and $d\in\mathbb{R}^{m}$, $\psi\in
\mathrm{L}^{r}$, respectively. The assumption $0 \leq \frac{1}{p} < \alpha < 1$ ensures that $1 \leq p^\prime < \frac{1}{1-\alpha}$. Then, $ \mathrm{AC}_{b-}^{\alpha,r} \subset \mathrm{L}^{p^\prime}$ (see Propositon~\ref{remlr}) and $\int_a^b (D^{\alpha}_{a+} q_1) (\t) \cdot q_2(\t) d\t$ is well-defined. Proposition~\ref{czint} gives
\begin{eqnarray*}
\int_a^b (D^{\alpha}_{a+} q_1) (\t) \cdot q_2(\t) \; d\t & = & \int_a^b \varphi(\t) \cdot q_2 (\t) \; d\t \\
& = & \dfrac{d}{\Gamma(\alpha)} \cdot \int_a^b \frac{\varphi(\t)}{(b-\t)^{1-\alpha}} \; d\t + \int_a^b \varphi(\t) \cdot (I^\alpha_{b-} \psi) (\t) \; d\t \\
& = & (I^\alpha_{a+} \varphi) (b) \cdot d + \int_a^b (I^\alpha_{a+} \varphi)(\t) \cdot \psi (\t) \; d\t.
\end{eqnarray*}
Similarly (but without using Proposition~\ref{czint}), it holds 
\begin{equation*}
\int_a^b q_1 (\t) \cdot (D^{\alpha}_{b-} q_2) (\t) \; d\t = c \cdot (I^{\alpha}_{b-} \psi)(a) + \int_a^b (I^\alpha_{a+} \varphi)(\t) \cdot \psi (\t) \; d\t.
\end{equation*}
The following equality follows:
\begin{equation*}
\int_a^b (D^{\alpha}_{a+} q_1) (\t) \cdot q_2(\t)- q_1 (\t) \cdot (D^{\alpha}_{b-} q_2) (\t) \; d\t = (I^\alpha_{a+} \varphi) (b) \cdot d - c \cdot (I^{\alpha}_{b-} \psi)(a).
\end{equation*}
Equalities $(I^\alpha_{a+} \varphi) (b) = q_1 (b) - \frac{1}{\Gamma (\alpha)} \frac{c}{(b-a)^{1-\alpha}}$, $(I^\alpha_{a+} \psi) (a) = q_2 (a) - \frac{1}{\Gamma (\alpha)} \frac{d}{(b-a)^{1-\alpha}}$, $c = (I^{1-\alpha}_{a+} q_1) (a)$ and $d = (I^{1-\alpha}_{b-} q_2) (b)$ conclude the proof.
\end{proof}

\begin{remark}
In the limit case of $\alpha=p=q=1$, the above theorem leads to the
classical integration by parts formula given by:
$$ \displaystyle\int_{a}^{b} q'_1(\t) \cdot q_2(\t) \; d\t= q_1(b) \cdot q_2(b) - q_1(a) \cdot q_2(a)- \int_{a}^{b} q_1(\t) \cdot q'_2(\t) \; d\t $$
for any $q_1$, $q_2\in \mathrm{AC}$.
\end{remark}

\section{Application to critical points of Bolza funtionals}\label{section4}

In this section we use the fractional fundamental lemma (Theorem~\ref{GDBR}) and the fractional integration by parts formula (Theorem~\ref{cze3}) to investigate the critical points of fractional Bolza functionals. We prove that such points satisfy an Euler-Lagrange equation with appropriate boundary conditions.

In the whole section, we fix $\alpha\in(0,1)$ and $1 \leq p \leq \infty$ and we assume that $0 \leq \frac{1}{p} < \alpha < 1$. Let us consider the following Bolza functional%
$$ \fonction{\Phi}{\mathrm{AC}^{\alpha,p}_{a+}}{\R}{q}{\displaystyle \int_a^b L(\t,q(\t),(D^\alpha_{a+} q) (\t) ) \; d\t + \ell ((I^{1-\alpha}_{a+} q)(a),q(b) )}$$
where $L$ is a Lagrangian, \textit{i.e.} a continuous mapping of class $\mathscr{C}^1$ in its two last variables
$$ \fonction{L}{[a,b] \times \R^m \times \R^m}{\R}{(t,x,v)}{L(t,x,v),} $$
and $\ell$ is a mapping of class $\mathscr{C}^1$
$$ \fonction{\ell}{\R^m \times \R^m}{\R}{(x_1,x_2)}{\ell (x_1,x_2).}$$
Recall that the first variation $\delta\Phi(q,h)$ of the functional $\Phi$ at the point $q \in \mathrm{AC}_{a+}^{\alpha,p}$ in the direction $h \in \mathrm{AC}_{a+}^{\alpha,p}$ is defined by (see \cite{ATF})
$$ \delta\Phi(q,h) :=\underset{\lambda \to 0}{\lim}\frac{\Phi(q+\lambda
h)-\Phi(q)}{\lambda}. $$
By a critical point of $\Phi$ we mean a point $q \in \mathrm{AC}_{a+}^{\alpha,p}$ such that $
\delta\Phi(q,h)=0$ for any $h\in \mathrm{AC}_{a+}^{\alpha,p}$.

In Section~\ref{section41} we introduce the notion of \textit{quasi-polynomially controlled growth} for $L$. In Section~\ref{section42}, under this assumption, we prove that $\Phi$ admits a first variation and the main result is next derived.

\subsection{Quasi-polynomially controlled growth for Lagrangian $L$}\label{section41}
Let us introduce a set $\mathscr{P}^{\alpha,p}_M$ containing quasi-polynomial functions.

\begin{definition}\label{defdef5}
Let $1 \leq M \leq \infty$. By $\mathscr{P}^{\alpha,p}_M$ we denote the set of all quasi-polynomial functions 
$$ P : (t,x,v) \in  [a,b] \times \R^m \times \R^m \longmapsto P(t,x,v) \in \R^+ $$
that can be written as follows
\begin{center}
\begin{tabular}{|c|c|c|}\hline
 & $p < \infty$ & $p = \infty$ \\ \hline 
$M < \infty$ & \parbox{6.9cm}{ $$P(t,x,v) = \displaystyle \sum_{k=0}^N c_k(t) \Vert x \Vert^{s_{1,k}} \Vert v \Vert^{s_{2,k}}$$ with \begin{equation*}
\left\lbrace \begin{array}{lcl}
\frac{s_{2,k}}{p} \leq \frac{1}{M} & \text{if} & s_{1,k} = 0 \\ \\
(1-\alpha)s_{1,k} + \frac{s_{2,k}}{p} < \frac{1}{M} & \text{if} & s_{1,k} > 0
\end{array} \right.
\end{equation*} } & \parbox{5.26cm}{ $$P(t,x,v) = \displaystyle \sum_{k=0}^N c_k(t,v) \Vert x \Vert^{s_{1,k}} $$  with $$(1-\alpha) s_{1,k} < \frac{1}{M}$$ } \\ \hline
$M= \infty$ & \parbox{6.9cm}{ $$ P(t,x,v) = c_0 (t) $$ } & \parbox{5.26cm}{ $$ P(t,x,v) = c_0 (t,v) $$ } \\ \hline
\end{tabular}
\end{center}
where $N \in \mathbb{N}$, $c_k$ are continuous mappings with nonnegative values and $s_{1,k}, s_{2,k}$ are nonnegative reals.
\end{definition}
The interest of these quasi-polynomial functions is given in the following lemma.

\begin{lemma}\label{lempolynomial}
For any $1 \leq M \leq \infty$ and any $P \in \mathscr{P}^{\alpha,p}_M$, we have
$$ P(\cdot,q,D^\alpha_{a+} q) \in \mathrm{L}^M$$
for any $q \in \mathrm{AC}^{\alpha,p}_{a+}$.
\end{lemma}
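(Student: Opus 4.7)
The plan is to treat the four cells of the table in Definition~\ref{defdef5} separately, with the bulk of the work lying in the case $p<\infty$, $M<\infty$, $s_{1,k}>0$. First I would extract a pointwise bound on $q$ that isolates the boundary singularity. By Proposition~\ref{dziewiec}, $q$ admits the representation $q(t)= \frac{c}{\Gamma(\alpha)(t-a)^{1-\alpha}} + (I^{\alpha}_{a+}\varphi)(t)$ with $\varphi=D^{\alpha}_{a+}q \in \mathrm{L}^{p}$; since $0 \leq \frac{1}{p} < \alpha$, Proposition~\ref{nose} yields $I^{\alpha}_{a+}\varphi \in \mathrm{L}^{\infty}$. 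Combining these, one obtains a constant $K$ with $\|q(t)\| \leq K\bigl((t-a)^{-(1-\alpha)}+1\bigr)$ for a.e. $t \in (a,b]$, whence $\|q(t)\|^{s_{1,k}} \leq C_{k}\bigl((t-a)^{-(1-\alpha)s_{1,k}}+1\bigr)$ for each exponent $s_{1,k}$ appearing in the sum.

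For the main case it then suffices to show, term by term, that $(t-a)^{-(1-\alpha)s_{1,k}} \|\varphi(t)\|^{s_{2,k}}$ lies in $\mathrm{L}^{M}$, the continuous coefficient $c_{k}(t)$ being bounded on $[a,b]$. I would apply H\"older's inequality with conjugate exponents $u,u'$ defined by $\frac{1}{u'} = \frac{s_{2,k}M}{p}$. Since $s_{1,k}>0$ forces $\frac{s_{2,k}}{p} < \frac{1}{M}$, we have $\frac{1}{u'}<1$, so $u'>1$ is admissible, and $\|\varphi\|^{s_{2,k}M} \in \mathrm{L}^{u'}$ because $\varphi \in \mathrm{L}^{p}$. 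The singular factor contributes $(t-a)^{-(1-\alpha)s_{1,k}Mu}$, integrable precisely when $(1-\alpha)s_{1,k}Mu < 1$, that is $(1-\alpha)s_{1,k}M < \frac{1}{u} = 1-\frac{s_{2,k}M}{p}$; dividing by $M$ gives exactly the hypothesis $(1-\alpha)s_{1,k} + \frac{s_{2,k}}{p} < \frac{1}{M}$.

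The three remaining cases are almost immediate. If $s_{1,k}=0$ with $p,M<\infty$, the term reduces to $c_{k}(t)\|\varphi(t)\|^{s_{2,k}}$ and belongs to $\mathrm{L}^{M}$ because $\varphi \in \mathrm{L}^{p} \subset \mathrm{L}^{s_{2,k}M}$ by $\frac{s_{2,k}}{p} \leq \frac{1}{M}$. If $p=\infty$ and $M<\infty$, essential boundedness of $\varphi$ confines its values to a compact ball, so $c_{k}(\cdot,\varphi(\cdot))$ is bounded by continuity, and one is left with showing $\|q\|^{s_{1,k}} \in \mathrm{L}^{M}$, which reduces to $(1-\alpha)s_{1,k} < \frac{1}{M}$ via the same singularity count. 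If $M=\infty$, then $P(\cdot,q,\varphi)$ is either a continuous function of $t$ alone (when $p<\infty$) or $c_{0}(\cdot,\varphi(\cdot))$ with $\varphi \in \mathrm{L}^{\infty}$ (when $p=\infty$), hence bounded in both subcases. The only real obstacle is the careful bookkeeping of H\"older exponents in the generic case; no deeper analytic input than Proposition~\ref{nose} and the representation of Proposition~\ref{dziewiec} is needed.
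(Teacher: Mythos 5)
Your proof is correct and takes essentially the same route as the paper, whose proof simply notes that $q \in \mathrm{L}^r$ for every $1 \leq r < \frac{1}{1-\alpha}$ (Proposition~\ref{remlr}) and $D^\alpha_{a+}q \in \mathrm{L}^p$, and then invokes H\"older's inequality; your explicit pointwise splitting of $q$ into the $(t-a)^{-(1-\alpha)}$ singularity plus a bounded remainder (using $I^\alpha_{a+}\varphi \in \mathrm{L}^\infty$, valid under the standing assumption $0 \leq \frac{1}{p} < \alpha$) is just a sharper packaging of that same integrability information. The exponent bookkeeping you carry out case by case is exactly the computation the paper leaves to the reader, and it checks out.
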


\begin{proof}
Since $q \in \mathrm{L}^r$ for every $1 \leq r < \frac{1}{1-\alpha}$ (see Proposition~\ref{remlr}) and $D^\alpha_{a+} q \in \mathrm{L}^p$ for any $ q \in \mathrm{AC}^{\alpha,p}_{a+}$, one can easily obtain this result from H\"older's inequalities.
\end{proof}

We introduce the following notion.

\begin{definition}\label{defdef6}
The growth of $L$ is said to be quasi-polynomially controlled if there exist $P_0 \in \mathscr{P}^{\alpha,p}_1$, $\frac{1}{\alpha} < s \leq \infty$, $P_1 \in \mathscr{P}^{\alpha,p}_s$ and $P_2 \in \mathscr{P}^{\alpha,p}_{p'}$ such that
\begin{enumerate}
\item $\vert L(t,x,v) \vert \leq P_0(t,x,v)$;
\item $\Vert L_x(t,x,v) \Vert \leq P_1(t,x,v)$;
\item $\Vert L_v(t,x,v) \Vert \leq P_2(t,x,v)$;
\end{enumerate}
for any $(t,x,v) \in [a,b] \times \R^m \times \R^m$.
\end{definition}
It can be noted that the more $\alpha$ is close to $1$ and/or the more $p$ is large, then the less the quasi-polynomially controlled growth of $L$ is a restrictive assumption.

\subsection{Main result}\label{section42}
The quasi-polynomially controlled growth of $L$ ensures the existence of the first variation of $\Phi$.

\begin{lemma}\label{diff}
If $L$ has a quasi-polynomially controlled growth, then the functional $\Phi$ is well-defined on $\mathrm{AC}^{\alpha,p}_{a+}$ and has the first variation $\delta\Phi(q,h)$ at any point
$q\in \mathrm{AC}_{a+}^{\alpha,p}$ and in any direction $h\in \mathrm{AC}_{a+}^{\alpha,p}$,
given by
\begin{multline*}
\delta \Phi (q,h) = \int_a^b L_x (\t,q(\t),(D^\alpha_{a+} q)(\t) ) \cdot h(\t) + L_v (\t,q(\t),(D^\alpha_{a+} q)(\t) ) \cdot (D^\alpha_{a+} h)(\t) \; d\t \\ + \ell_{x_1} ((I^{1-\alpha}_{a+} q)(a),q(b) ) \cdot (I^{1-\alpha}_{a+} h)(a) + \ell_{x_2} ((I^{1-\alpha}_{a+} q)(a),q(b) ) \cdot h(b).
\end{multline*}
\end{lemma}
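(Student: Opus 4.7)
The plan uses three ingredients: control of the Lagrangian by the bound $P_0 \in \mathscr{P}^{\alpha,p}_1$ via Lemma~\ref{lempolynomial} for well-definedness, an application of dominated convergence to the difference quotient of the integral term, and the classical chain rule for the boundary term. For the well-definedness of $\Phi$, any $q \in \mathrm{AC}^{\alpha,p}_{a+}$ admits the pointwise identification~\eqref{identificationleft}, which gives meaning to $q(b)$ and to $(I^{1-\alpha}_{a+} q)(a)$; hence $\ell((I^{1-\alpha}_{a+} q)(a), q(b))$ makes sense since $\ell$ is continuous. The growth hypothesis provides $|L(\t, q(\t), (D^\alpha_{a+} q)(\t))| \leq P_0(\t, q(\t), (D^\alpha_{a+} q)(\t))$ pointwise, and Lemma~\ref{lempolynomial} places the right-hand side in $\mathrm{L}^1$, so the integral term is well-defined.

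For the integral part of the first variation, I would fix $q, h \in \mathrm{AC}^{\alpha,p}_{a+}$, take $|\lambda| \leq 1$, and apply the mean value theorem in $s \in [0,1]$ to the smooth scalar map $s \mapsto L(\t, q(\t) + s\lambda h(\t), (D^\alpha_{a+} q)(\t) + s\lambda (D^\alpha_{a+} h)(\t))$. This rewrites the difference quotient pointwise in $\t$ as $L_x(\t, \xi_\lambda(\t)) \cdot h(\t) + L_v(\t, \xi_\lambda(\t)) \cdot (D^\alpha_{a+} h)(\t)$, with $\xi_\lambda(\t) = (q(\t) + \theta \lambda h(\t), (D^\alpha_{a+} q)(\t) + \theta \lambda (D^\alpha_{a+} h)(\t))$ for some $\theta = \theta(\t,\lambda) \in (0,1)$. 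Pointwise convergence as $\lambda \to 0$ to $L_x(\t, q, D^\alpha_{a+} q) \cdot h + L_v(\t, q, D^\alpha_{a+} q) \cdot D^\alpha_{a+} h$ is immediate from the continuity of $L_x$ and $L_v$.

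The main obstacle is producing an $\mathrm{L}^1$-majorant for this difference quotient, uniformly in $\lambda \in [-1,1] \setminus \{0\}$. Using an elementary inequality of the form $\|y_1 + y_2\|^s \leq C_s(\|y_1\|^s + \|y_2\|^s)$ termwise in each monomial of $P_1$, one dominates $P_1(\t, \xi_\lambda(\t))$ by a finite sum of quasi-polynomial expressions in $(q, h, D^\alpha_{a+} q, D^\alpha_{a+} h)$; by inspection of Definition~\ref{defdef5}, each such summand again belongs to $\mathscr{P}^{\alpha,p}_s$, so Lemma~\ref{lempolynomial} places the total majorant in $\mathrm{L}^s$. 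The assumption $s > 1/\alpha$ forces $s' < 1/(1-\alpha)$, whence $h \in \mathrm{L}^{s'}$ by Proposition~\ref{remlr}, and H\"older's inequality yields the desired uniform $\mathrm{L}^1$-bound for $L_x(\t, \xi_\lambda) \cdot h$. The analogous scheme with $P_2 \in \mathscr{P}^{\alpha,p}_{p'}$ and $D^\alpha_{a+} h \in \mathrm{L}^p$ handles $L_v(\t, \xi_\lambda) \cdot D^\alpha_{a+} h$, and dominated convergence delivers the integral part of $\delta\Phi(q,h)$.

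The boundary term poses no difficulty: the map $\lambda \mapsto \ell((I^{1-\alpha}_{a+} q)(a) + \lambda (I^{1-\alpha}_{a+} h)(a), q(b) + \lambda h(b))$ is obtained by composing an affine map in $\lambda$ with $\ell \in \mathscr{C}^1$, and the classical chain rule at $\lambda = 0$ produces the last two terms in the stated expression for $\delta\Phi(q,h)$.
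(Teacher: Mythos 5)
Your proposal is correct and follows essentially the same route as the paper: well-definedness via $P_0$ and Lemma~\ref{lempolynomial}, the mean value theorem plus dominated convergence for the integral term (with the uniform $\mathrm{L}^1$-majorant built from $P_1\in\mathscr{P}^{\alpha,p}_s$, $P_2\in\mathscr{P}^{\alpha,p}_{p'}$ and H\"older, using $h\in\mathrm{L}^{s'}$ and $D^{\alpha}_{a+}h\in\mathrm{L}^{p}$), and the chain rule for the $\ell$-term. The only difference is presentational: the paper factors $\Phi=\Phi_2\circ\Phi_1$ and leaves the dominated-convergence check implicit, whereas you carry it out explicitly.
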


\begin{proof}
The first variation of the mapping $q \in \mathrm{AC}^{\alpha,p}_{a+} \longmapsto \ell ((I^{1-\alpha}_{a+} q)(a),q(b) ) \in \R$ obviously exists. Then, we assume that $\ell = 0$ in the sequel of the proof. Well-definedness of $\Phi$ and $\delta\Phi$ follows from the quasi-polynomially controlled growth of $L$ and from Lemma~\ref{lempolynomial}. Indeed, for any $q \in \mathrm{AC}^{\alpha,p}_{a+}$, one has:
\begin{enumerate}
\item $L(\cdot,q,D^\alpha_{a+} q) \in \mathrm{L}^1$;
\item $L_x (\cdot,q,D^\alpha_{a+} q) \in \mathrm{L}^s$ and $h \in \mathrm{L}^{s'}$ since $\frac{1}{\alpha} < s \leq \infty$ and $h \in \mathrm{L}^r$ for every $1 \leq r < \frac{1}{1-\alpha}$;
\item $L_v (\cdot,q,D^\alpha_{a+} q) \in \mathrm{L}^{p'}$ and $D^\alpha_{a+} h \in \mathrm{L}^{p}$.
\end{enumerate}
Let us write $\Phi$ as the superposition $ \Phi=\Phi_{2}\circ\Phi_{1} $ where
$$ \Phi_{1}:  q \in \mathrm{AC}_{a+}^{\alpha,p} \longmapsto L(\cdot,q,D_{a+}^{\alpha} q) \in \mathrm{L}^{1}, $$
$$ \Phi_{2}:q \in \mathrm{L}^1 \longmapsto \displaystyle\int_{a}^{b} q(\t) \; d\t \in\mathbb{R}. $$
Of course, linear and continuous mapping $\Phi_{2}$ is Frechet differentiable
on $\mathrm{L}^{1}$ and the differential at any point $q\in \mathrm{L}^{1}$ is equal to $\Phi
_{2}$. So (see \cite[section 2.2.2]{ATF}), it is sufficient to show that
$\Phi_{1}$ has the first variation $\delta\Phi_{1}(q,h)$ at any $q\in
\mathrm{AC}_{a+}^{\alpha,p}$ and in any direction $h\in \mathrm{AC}_{a+}^{\alpha,p}$, given by
$$ \delta\Phi_{1}(q,h)=L_{x}(\cdot,q,D_{a+}^{\alpha}q) \cdot h + L_{v}(\cdot,q,D_{a+}^{\alpha}q) \cdot D_{a+}^{\alpha} h. $$
Indeed, it can be checked with the aid of the Lebesgue dominated convergence
theorem and the Mean value theorem.
\end{proof}

Now, we shall prove that critical points of $\Phi$ are solutions to a
boundary value problem.

\begin{theorem}\label{TEL1}
If $L$ has a quasi-polynomially controlled growth and if $q\in
\mathrm{AC}_{a+}^{\alpha,p}$ is a critical point of $\Phi$, then $L_{v}(\cdot
,q,D_{a+}^{\alpha}q) \in \mathrm{AC}_{b-}^{\alpha,s}$ and%
\begin{equation}\label{cztery}
(D_{b-}^{\alpha} L_{v}(\cdot,q,D_{a+}^{\alpha}q) )(t)=-L_{x}(t,q(t),(D_{a+}^{\alpha}q)(t)),\quad t\in [a,b]\text{ a.e.,} 
\end{equation}
\begin{equation}\label{piec}
L_{v}(a,q(a),(D_{a+}^{\alpha}q)(a))= \ell_{x_1} ((I^{1-\alpha}_{a+} q)(a),q(b) ) , 
\end{equation}
\begin{equation}\label{szesc}
( I_{b-}^{1-\alpha} L_{v}(\cdot,q,D_{a+}^{\alpha}q) ) (b)= - \ell_{x_2} ((I^{1-\alpha}_{a+} q)(a),q(b) ).
\end{equation}
Equation~\eqref{cztery} is the so-called Euler-Lagrange equation. Equalities~\eqref{piec} and \eqref{szesc} are boundary conditions.
\end{theorem}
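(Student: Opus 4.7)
The plan is to exploit the critical point condition $\delta\Phi(q,h)=0$ (from Lemma~\ref{diff}) in two successive rounds of test-function choice: first with compactly supported $h$ in order to extract the Euler--Lagrange equation via the fractional fundamental lemma, then with general $h\in \mathrm{AC}_{a+}^{\alpha,p}$ to peel off the two boundary conditions via the fractional integration by parts formula and well-chosen variations.

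For the first round, I check that $\mathscr{C}_c^\infty \subset \mathrm{AC}_{a+}^{\alpha,p}$ exactly as in the proof of Theorem~\ref{GDBR} (so $D^\alpha_{a+} h = I^{1-\alpha}_{a+} h' \in \mathrm{L}^\infty \subset \mathrm{L}^p$), and that $(I^{1-\alpha}_{a+} h)(a) = 0$ since $h = I^1_{a+} h'$ gives $I^{1-\alpha}_{a+} h = I^{2-\alpha}_{a+} h'$, which vanishes at $t=a$. Both $\ell$-terms and the trace $h(b)$ in $\delta\Phi(q,h)$ then vanish, reducing the critical point condition to
$$\int_a^b L_x(\cdot,q,D^\alpha_{a+} q) \cdot h + L_v(\cdot,q,D^\alpha_{a+} q) \cdot D^\alpha_{a+} h \; d\t = 0.$$
The quasi-polynomial growth hypothesis together with Lemma~\ref{lempolynomial} gives $L_x(\cdot,q,D^\alpha_{a+} q) \in \mathrm{L}^s \subset \mathrm{L}^1$ and $L_v(\cdot,q,D^\alpha_{a+} q) \in \mathrm{L}^{p'} \subset \mathrm{L}^1$, so Theorem~\ref{GDBR} applies and yields $D^\alpha_{b-} L_v(\cdot,q,D^\alpha_{a+} q) = -L_x(\cdot,q,D^\alpha_{a+} q)$ a.e. Since the right-hand side lies in $\mathrm{L}^s$, this simultaneously shows $L_v(\cdot,q,D^\alpha_{a+} q) \in \mathrm{AC}_{b-}^{\alpha,s}$ and establishes the Euler--Lagrange equation \eqref{cztery}.

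For the second round, I take arbitrary $h\in \mathrm{AC}_{a+}^{\alpha,p}$ and apply Theorem~\ref{cze3} with $q_1=h$ and $q_2=L_v(\cdot,q,D^\alpha_{a+} q)$; the required strict bounds $1/p<\alpha$ and $1/s<\alpha$ hold by hypothesis. Substituting the resulting expression for $\int_a^b L_v\cdot D^\alpha_{a+} h\,d\t$ together with $D^\alpha_{b-} L_v = -L_x$ into $\delta\Phi(q,h)=0$, the two bulk integrals cancel and I am left with
$$\bigl[(I^{1-\alpha}_{b-} L_v)(b) + \ell_{x_2}\bigr] \cdot h(b) + \bigl[\ell_{x_1} - L_v(a,q(a),(D^\alpha_{a+} q)(a))\bigr] \cdot (I^{1-\alpha}_{a+} h)(a) = 0$$
for every $h \in \mathrm{AC}_{a+}^{\alpha,p}$, where $\ell_{x_1},\ell_{x_2}$ are evaluated at $((I^{1-\alpha}_{a+} q)(a),q(b))$.

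To separate the two brackets I use the integral representation \eqref{identificationleft}. Taking first $h = I^\alpha_{a+} \varphi$ with $\varphi = e\in\R^m$ constant gives, via Proposition~\ref{thmcomposition}, $(I^{1-\alpha}_{a+} h)(a) = (I^1_{a+} e)(a) = 0$, while $h(b) = e(b-a)^\alpha/\Gamma(\alpha+1)$ runs over all of $\R^m$ as $e$ varies; this forces \eqref{szesc}. Then taking $h(t) = c/[\Gamma(\alpha)(t-a)^{1-\alpha}]$ (i.e.\ $\varphi=0$, $c\in\R^m$ arbitrary), the identity $I^{1-\alpha}_{a+} (t-a)^{\alpha-1} = \Gamma(\alpha)$ shows $(I^{1-\alpha}_{a+} h)(a) = c$, and since \eqref{szesc} has already killed the $h(b)$ contribution, varying $c$ yields \eqref{piec}. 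The main subtlety of the argument is choosing the two test functions so as to independently prescribe the traces $h(b)$ and $(I^{1-\alpha}_{a+} h)(a)$; this is legitimate precisely because $1/p<\alpha$, which is exactly what makes the pointwise identification \eqref{identificationleft} meaningful.
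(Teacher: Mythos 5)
Your proof is correct and follows essentially the same route as the paper: compactly supported variations plus Theorem~\ref{GDBR} for the Euler--Lagrange equation, then Theorem~\ref{cze3} and two explicit test functions to extract the boundary conditions. The only (harmless) deviation is in the second test function: the paper adds a correction $(I^{\alpha}_{a+}\theta)(t)$ so that $h(b)=0$, whereas you take $h(t)=c(t-a)^{\alpha-1}/\Gamma(\alpha)$ directly and rely on the already-proved \eqref{szesc} to annihilate the $h(b)$ term --- both are valid.
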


\begin{proof}
Let $q \in \mathrm{AC}_{a+}^{\alpha,p}$ be a critical point of $\Phi$. Then Lemma~\ref{diff} leads to
\begin{multline}\label{eq9874}
\int_a^b L_x (\t,q(\t),(D^\alpha_{a+} q)(\t) ) \cdot h(\t) + L_v (\t,q(\t),(D^\alpha_{a+} q)(\t) ) \cdot (D^\alpha_{a+} h)(\t) \; d\t \\ + \ell_{x_1} ((I^{1-\alpha}_{a+} q)(a),q(b) ) \cdot (I^{1-\alpha}_{a+} h)(a) + \ell_{x_2} ((I^{1-\alpha}_{a+} q)(a),q(b) ) \cdot h(b) = 0
\end{multline}
for any $h \in \mathrm{AC}_{a+}^{\alpha,p}$.

Note that $(I^{1-\alpha}_{a+} h)(a) = h(b) = 0$ for any $h\in \mathscr{C}^\infty_c \subset \mathrm{AC}_{a+}^{\alpha,p}$. Then we have
\begin{equation*}
\displaystyle \int_{a}^{b} L_{x}(\t,q(\t),(D_{a+}^{\alpha}q)(\t)) \cdot h(\t)+L_{v}(\t,q(\t),(D_{a+}^{\alpha}q)(\t)) \cdot (D_{a+}^{\alpha}h)(\t) \; d\t = 0, 
\end{equation*}
for any $h\in \mathscr{C}^\infty_c$. Theorem~\ref{GDBR} implies that $L_{v}(\cdot,q,D_{a+}^{\alpha}q)\in \mathrm{AC}_{b-}^{\alpha,s}$ and Equation~\eqref{cztery} holds true almost everywhere.

Let us observe that the boundary conditions \eqref{piec}, \eqref{szesc} are satisfied. Indeed, from \eqref{cztery} and integrating by parts (see Theorem \ref{cze3}), we obtain
\begin{multline*}
\int_a^b L_{v}(\t,q(\t),(D_{a+}^{\alpha}q)(\t)) \cdot (D_{a+}^{\alpha}h)(\t) \; d\t =
- \int_a^b L_{x}(\t,q(\t),(D_{a+}^{\alpha}q)(\t)) \cdot h(\t) \; d\t \\
+( I_{b-}^{1-\alpha} L_{v}(\cdot,q,D_{a+}^{\alpha}q ))(b) \cdot h(b)- L_{v}(a,q(a),(D_{a+}^{\alpha}q)(a)) \cdot (I_{a+}^{1-\alpha}h)(a),
\end{multline*}
for any $h\in \mathrm{AC}_{a+}^{\alpha,p}$. Therefore, Equality~\eqref{eq9874} becomes
\begin{multline}\label{equseful}
\Big[\ell_{x_2} ((I^{1-\alpha}_{a+} q)(a),q(b)) +( I_{b-}^{1-\alpha} L_{v}(\cdot,q,D_{a+}^{\alpha}q ))(b) \Big] \cdot h(b) \\ + \Big[\ell_{x_1} ((I^{1-\alpha}_{a+} q)(a),q(b) ) - L_{v}(a,q(a),(D_{a+}^{\alpha}q)(a)) \Big] \cdot (I_{a+}^{1-\alpha}h)(a) = 0
\end{multline}
for any $h\in \mathrm{AC}_{a+}^{\alpha,p}$.

Now, let us fix any $i\in\{1,\ldots,n\}$ and let us consider the function $ h(t)=(0,\ldots,0,(I_{a+}^{\alpha}1)(t),0,\ldots,0) $
with the nonzero $i$-th coordinate function. It belongs to $\mathrm{AC}_{a+}^{\alpha
,p}$, $h(b)=(0,\ldots,0,(I_{a+}^{\alpha
}1)(b),0,\ldots,0)$ and Proposition~\ref{dziewiec} gives $(I_{a+}^{1-\alpha}h)(a)=0$. Since $i\in\{1,\ldots,n\}$ is arbitrary and $(I_{a+}^{\alpha
}1)(b)>0$, \eqref{equseful} implies that
$$ ( I_{b-}^{1-\alpha} L_{v}(\cdot,q,D_{a+}^{\alpha}q ))(b) = -\ell_{x_2} ((I^{1-\alpha}_{a+} q)(a),q(b)). $$
Let us consider the function
$$ h(t)=(0,\ldots,0,\frac{1}{\Gamma(\alpha)}\frac{1}{(t-a)^{1-\alpha}} +(I_{a+}^{\alpha} \theta)(t),0,\ldots,0), $$
with $\theta = \frac{-\Gamma(\alpha+1)}{\Gamma(\alpha)(b-a)}$. It belongs to $\mathrm{AC}^{\alpha,p}_{a+}$, Proposition~\ref{dziewiec} gives
$$(I_{a+}^{1-\alpha}h)(a)=(0,\ldots,0,1,0,\ldots,0)$$
and the value of $\theta$ ensures that $h(b)=0$. Thus, \eqref{equseful} implies that
$$ L_{v}(a,q(a),(D_{a+}^{\alpha}q)(a))= \ell_{x_1} ((I^{1-\alpha}_{a+} q)(a),q(b) ). $$
The proof is complete.
\end{proof}

\begin{example}
Let $1 < r < \infty$ and let $L$ be given by $L(t,x,v) = \Vert x \Vert^r + \Vert v \Vert^r$. Then, $L$ has a quasi-polynomially controlled growth for any $\alpha \in (\frac{1}{r'},1)$ and any $r \leq p \leq \infty$. Consequently, Theorem~\ref{TEL1} can be applied for any $\alpha \in (\frac{1}{r'},1)$ and any $r \leq p \leq \infty$.

For example, in the case $r=2$ and $\alpha \in (\frac{1}{2},1)$, the Euler-Lagrange equation associated with $\Phi$ is given by
$$ D^\alpha_{b-}  D^\alpha_{a+} q = - q .$$
Moreover, if $\ell = 0$, the boundary conditions are given by
$$ (D^\alpha_{a+} q) (a) = 0, $$
$$ (I^{1-\alpha}_{b-}  D^\alpha_{a+} q)(b) = 0 .$$
\end{example}

\section{Application to linear boundary value problems}\label{section5}

The aim of this section is to investigate an existence result for solutions of the following linear boundary value problem
\begin{equation}\label{10a}
( D^\alpha_{b-}  D^\alpha_{a+} q)(t)+q(t) = f(t), \ t\in [a,b]\text{a.e.},
\end{equation}
\begin{equation}\label{10b}
(I^{1-\alpha}_{a+} q)(a) = q_a, \ q(b) = q_b, 
\end{equation}
where $\alpha \in (\frac{1}{2},1)$, $f \in \mathrm{L}^2$ and $q_a$, $q_b \in \R^m$. Our strategy is to use a Hilbert structure of $\mathrm{AC}^{\alpha,2}_{a+}$ and the classical Stampacchia theorem (see \cite{Bre}) recalled below. 

\begin{proposition}[Stampacchia theorem]
Let $(H,\Vert \cdot \Vert_H)$ be a real Hilbert space and $K\subset H$ be a nonempty closed convex set. Let $a:H\times H\longrightarrow\mathbb{R}$ be a continuous and coercive bilinear form and let $\phi : H \longrightarrow \R$ be a continuous linear form. Then, there exists exactly one point $x\in K$ such that $a(x,y-x)\geq \phi(y-x)$ for any $ y\in K$.
\end{proposition}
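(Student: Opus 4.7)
The plan is to reduce the variational inequality to a fixed point equation for a projection map on $K$, and then apply the Banach contraction principle. First I would invoke the Riesz representation theorem twice. Applied to the continuous linear form $\phi$, it produces a unique $f\in H$ with $\phi(y)=\langle f,y\rangle_H$. Applied to each partial form $y\mapsto a(x,y)$, it produces a unique continuous linear operator $A:H\to H$ with $a(x,y)=\langle Ax,y\rangle_H$; the continuity of $a$ gives a bound $\|Ax\|_H\leq M\|x\|_H$ and the coercivity of $a$ gives a constant $\nu>0$ with $\langle Ax,x\rangle_H\geq \nu \|x\|_H^2$. In these terms the variational inequality reads: find $x\in K$ such that $\langle Ax-f,y-x\rangle_H\geq 0$ for all $y\in K$.

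Next I would rewrite this inequality, for any fixed parameter $\rho>0$, as
\[
\langle (x-\rho Ax+\rho f)-x,y-x\rangle_H\leq 0\quad \text{for all }y\in K.
\]
By the standard characterisation of the projection $P_K$ onto a nonempty closed convex subset of a Hilbert space, this is equivalent to the fixed point identity $x=P_K(x-\rho Ax+\rho f)$. So I would define $T_\rho:K\to K$ by $T_\rho(x):=P_K(x-\rho Ax+\rho f)$ and show that, for a well chosen $\rho$, $T_\rho$ is a strict contraction.

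The key estimate uses that $P_K$ is $1$-Lipschitz together with the expansion
\[
\|(x_1-x_2)-\rho A(x_1-x_2)\|_H^2=\|x_1-x_2\|_H^2-2\rho\langle A(x_1-x_2),x_1-x_2\rangle_H+\rho^2\|A(x_1-x_2)\|_H^2,
\]
which is bounded by $(1-2\rho\nu+\rho^2 M^2)\|x_1-x_2\|_H^2$. Choosing $\rho\in(0,2\nu/M^2)$ makes the factor strictly less than $1$, so $T_\rho$ is a contraction on the nonempty closed set $K$ (viewed as a complete metric space in the induced norm). Banach's fixed point theorem then yields existence and uniqueness of a fixed point $x\in K$, which is exactly the sought solution of the variational inequality.

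The main obstacle is the choice of $\rho$: it has to be small enough for the quadratic expansion to yield a contraction, and this is precisely where the coercivity constant $\nu$ and the continuity constant $M$ enter on an equal footing. Uniqueness could also be extracted directly (without the contraction argument) by writing the inequality once for $x_1$ with test point $y=x_2$, once for $x_2$ with $y=x_1$, adding, and using coercivity of $a$ to deduce $\|x_1-x_2\|_H=0$; this is a useful sanity check but is already subsumed by the Banach argument.
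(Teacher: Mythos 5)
Your proof is correct: the reduction to the fixed point equation $x=P_K(x-\rho Ax+\rho f)$ via the two Riesz representations, the contraction estimate with $\rho\in(0,2\nu/M^2)$, and the Banach fixed point argument are all sound, and the direct uniqueness check via coercivity is also valid. Note, however, that the paper does not prove this proposition at all: it is recalled as a classical result with a citation to Brezis, and the argument you give is precisely the standard proof found in that reference, so there is nothing in the paper to compare it against.
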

In what follows, we assume that $p=2$ and $\alpha\in(\frac{1}{2},1)$. In particular, $\mathrm{AC}^{\alpha,2}_{a+} \subset \mathrm{L}^2$. We endow $\mathrm{AC}^{\alpha,2}_{a+}$ with the scalar product
$$ \langle q_1, q_2 \rangle_{\mathrm{AC}^{\alpha,2}_{a+}} = \int_a^b q_1(\t) \cdot q_2(\t) \; d\t + \int_a^b (D^\alpha_{a+} q_1)(\t) \cdot (D^\alpha_{a+} q_2)(\t) \; d\t $$
and by $\Vert \cdot \Vert_{\mathrm{AC}^{\alpha,2}_{a+}}$ we denote the generated norm, that is:
$$ \Vert q \Vert_{\mathrm{AC}^{\alpha,2}_{a+}} = (\Vert q \Vert^2_{\mathrm{L}^2} + \Vert D^\alpha_{a+} q \Vert^2_{\mathrm{L}^2} )^{1/2}, \ q \in \mathrm{AC}^{\alpha,2}_{a+}. $$

\begin{lemma}
The functional space $(\mathrm{AC}^{\alpha,2}_{a+}, \Vert \cdot \Vert_{\mathrm{AC}^{\alpha,2}_{a+}} )$ is a Hilbert space.
\end{lemma}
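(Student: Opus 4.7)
The plan is to verify that $\langle \cdot , \cdot \rangle_{\mathrm{AC}^{\alpha,2}_{a+}}$ is an inner product (straightforward) and then to establish completeness, which is the main content of the lemma. Bilinearity and symmetry are immediate from the definition. For positive definiteness, if $\langle q,q \rangle_{\mathrm{AC}^{\alpha,2}_{a+}} = 0$ then in particular $\Vert q \Vert_{\mathrm{L}^2} = 0$, hence $q=0$ in $\mathrm{L}^2$; since elements of $\mathrm{AC}^{\alpha,2}_{a+}$ are viewed as $\mathrm{L}^1$-functions (up to a.e.-equality), $q=0$ in $\mathrm{AC}^{\alpha,2}_{a+}$.

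For completeness, let $(q_n)_{n\in\N}$ be a Cauchy sequence in $\mathrm{AC}^{\alpha,2}_{a+}$. By definition of the norm, $(q_n)$ is Cauchy in $\mathrm{L}^2$ and $(D^\alpha_{a+} q_n)$ is Cauchy in $\mathrm{L}^2$. By completeness of $\mathrm{L}^2$, there exist $q,\varphi \in \mathrm{L}^2$ such that $q_n \to q$ and $D^\alpha_{a+} q_n \to \varphi$ in $\mathrm{L}^2$. The goal reduces to proving that $q \in \mathrm{AC}^{\alpha,2}_{a+}$ and $D^\alpha_{a+} q = \varphi$; then $\Vert q_n - q \Vert_{\mathrm{AC}^{\alpha,2}_{a+}} \to 0$ follows at once.

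To identify the limit, I would invoke the integral representation of Proposition~\ref{dziewiec}: writing $c_n := (I^{1-\alpha}_{a+} q_n)(a) \in \R^m$ and $\varphi_n := D^\alpha_{a+} q_n \in \mathrm{L}^2$, we have, a.e. on $[a,b]$,
$$ q_n(t) = \frac{1}{\Gamma(\alpha)} \frac{c_n}{(t-a)^{1-\alpha}} + (I^\alpha_{a+} \varphi_n)(t). $$
Since $\alpha > 1/2$, Proposition~\ref{nose} (case 4, with $p=2$) tells us that $I^\alpha_{a+} : \mathrm{L}^2 \to \mathrm{L}^\infty$ is a bounded linear operator, so $I^\alpha_{a+} \varphi_n \to I^\alpha_{a+} \varphi$ in $\mathrm{L}^\infty$, hence in $\mathrm{L}^2$. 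Consequently the sequence of functions $t \mapsto c_n/(\Gamma(\alpha)(t-a)^{1-\alpha})$ converges in $\mathrm{L}^2$. Crucially, again because $\alpha > 1/2$, the fixed function $g(t) := 1/(\Gamma(\alpha)(t-a)^{1-\alpha})$ satisfies $g \in \mathrm{L}^2$ with $\Vert g \Vert_{\mathrm{L}^2} > 0$, so the scalar sequence $(c_n)$ converges to some $c \in \R^m$. Passing to the limit in the displayed identity yields
$$ q(t) = \frac{1}{\Gamma(\alpha)} \frac{c}{(t-a)^{1-\alpha}} + (I^\alpha_{a+} \varphi)(t), \quad t\in [a,b] \text{ a.e.}, $$
and Proposition~\ref{dziewiec} (sufficient condition) then gives $q \in \mathrm{AC}^{\alpha,2}_{a+}$ with $D^\alpha_{a+} q = \varphi \in \mathrm{L}^2$, which is what we needed.

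The main obstacle is the last step: decoupling the singular boundary contribution $c_n \, g$ from the smooth part $I^\alpha_{a+}\varphi_n$ in order to deduce convergence of the constants $c_n$. This decoupling relies entirely on the hypothesis $\alpha \in (1/2,1)$, through two places: it ensures $g \in \mathrm{L}^2$ (so that $c_n g$ lives in the ambient space) and it ensures $I^\alpha_{a+}$ maps $\mathrm{L}^2$ continuously into $\mathrm{L}^\infty$ (so the regular part converges in a strong enough topology). Once $(c_n)$ is known to converge and the integral representation passes to the limit, membership in $\mathrm{AC}^{\alpha,2}_{a+}$ and identification of the derivative follow directly from Proposition~\ref{dziewiec}.
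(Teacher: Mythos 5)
Your proof is correct, but it follows a genuinely different route from the paper's. The paper argues in the style of the classical completeness proof for Sobolev spaces: it tests the identity $\int_a^b (D^\alpha_{a+}q_n)\cdot h - q_n\cdot(D^\alpha_{b-}h)\,d\t=0$ (obtained from the integration by parts formula of Theorem~\ref{cze3}, with vanishing boundary terms since $h\in\mathscr{C}^\infty_c$) and passes to the limit, then invokes the right-sided counterpart of the fractional fundamental lemma (Theorem~\ref{GDBR}) to conclude that the $\mathrm{L}^2$-limit $q$ possesses a left-sided derivative equal to $u$. You instead work directly with the integral representation of Proposition~\ref{dziewiec}, decoupling the singular part $c_n g$ from the regular part $I^\alpha_{a+}\varphi_n$: the continuity of $I^\alpha_{a+}$ on $\mathrm{L}^2$ forces the regular parts to converge, and since $g\in\mathrm{L}^2\setminus\{0\}$ when $\alpha>\frac12$, the identity $\Vert(c_n-c_m)g\Vert_{\mathrm{L}^2}=\Vert c_n-c_m\Vert\,\Vert g\Vert_{\mathrm{L}^2}$ forces $(c_n)$ to be Cauchy in $\R^m$; the sufficiency direction of Proposition~\ref{dziewiec} then closes the argument. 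Both proofs use the standing hypothesis $\alpha\in(\frac12,1)$ in an essential way (without it $\mathrm{AC}^{\alpha,2}_{a+}\not\subset\mathrm{L}^2$ and the norm is not even defined). Your approach is more elementary in that it bypasses Theorems~\ref{GDBR} and~\ref{cze3} entirely, and it yields strictly more information as a by-product, namely the convergence of the boundary constants $(I^{1-\alpha}_{a+}q_n)(a)$ and the uniform convergence of the regular parts, which is exactly what is needed in the next lemma on the closedness of $K$; the paper's approach is shorter given the machinery already in place and generalizes more readily to settings where no explicit representation formula is available. One small citation point: Proposition~\ref{nose} as stated only asserts the membership $I^\alpha_{a+}\varphi\in\mathrm{L}^\infty$, not operator boundedness from $\mathrm{L}^2$ to $\mathrm{L}^\infty$; either justify the bound by Cauchy--Schwarz (it gives $\Vert I^\alpha_{a+}\varphi\Vert_{\mathrm{L}^\infty}\leq\frac{(b-a)^{\alpha-1/2}}{\Gamma(\alpha)\sqrt{2\alpha-1}}\Vert\varphi\Vert_{\mathrm{L}^2}$) or simply use the $\mathrm{L}^2\to\mathrm{L}^2$ continuity of $I^\alpha_{a+}$ recalled at the beginning of Section~\ref{section21}, which already suffices for your argument.
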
 

\begin{proof}
Let us prove that $\mathrm{AC}^{\alpha,2}_{a+}$ is complete. Let $(q_n)_{n \in \N} \subset \mathrm{AC}^{\alpha,2}_{a+}$ be a Cauchy sequence. Then $(q_n)_{n \in \N}$ and $(D^\alpha_{a+} q_n)_{n \in \N}$ are Cauchy sequences in the complete space $(\mathrm{L}^2,\Vert \cdot \Vert_{\mathrm{L}^2} )$. By $q$ and $u$ we denote their respective limits in $(\mathrm{L}^2,\Vert \cdot \Vert_{\mathrm{L}^2} )$. Our aim is to prove that $q \in \mathrm{AC}^{\alpha,2}_{a+}$ and $D^\alpha_{a+} q = u$. Theorem~\ref{cze3} leads to
$$ \int_a^b (D^\alpha_{a+} q_n)(\t) \cdot h(\t) - q_n \cdot (D^{\alpha}_{b-} h)(\t) \; d\t = 0 $$
for any $n \in \N$ and any $h \in \mathscr{C}^\infty_c$. Passing to the limit on $n$, we obtain
$$ \int_a^b u(\t) \cdot h(\t) - q \cdot (D^{\alpha}_{b-} h)(\t) \; d\t = 0 $$
for any $h \in \mathscr{C}^\infty_c$. The counterpart of Theorem~\ref{GDBR} for the right-sided fractional derivative concludes the proof.
\end{proof}

\begin{lemma}
The set
$$ K=\{q\in \mathrm{AC}_{a+}^{\alpha,2};\ (I_{a+}^{1-\alpha}q)(a)=q_a,\ q(b)=q_b\} $$
is a nonempty closed and convex subset of $\mathrm{AC}_{a+}^{\alpha,2}$.
\end{lemma}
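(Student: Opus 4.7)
The plan is to verify the three assertions (non-emptiness, convexity, closedness) separately, the last one being the only non-trivial step.

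\emph{Convexity} is immediate: both constraints $(I^{1-\alpha}_{a+} q)(a) = q_a$ and $q(b) = q_b$ are affine in $q$ (using the pointwise identification~\eqref{identificationleft}), so $K$ is the intersection of an affine subspace with $\mathrm{AC}^{\alpha,2}_{a+}$.

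For \emph{non-emptiness}, I would exhibit an explicit element using Proposition~\ref{dziewiec}. Any function of the form
$$q(t) = \frac{1}{\Gamma(\alpha)}\frac{q_a}{(t-a)^{1-\alpha}} + (I^\alpha_{a+} \varphi)(t), \quad t\in(a,b],$$
with $\varphi \in \mathrm{L}^2$, lies in $\mathrm{AC}^{\alpha,2}_{a+}$ and automatically satisfies $(I^{1-\alpha}_{a+} q)(a) = q_a$. It then remains to choose $\varphi$ so that $q(b) = q_b$; a direct computation shows that the constant vector $\varphi \equiv \frac{\Gamma(\alpha+1)}{(b-a)^\alpha}\bigl(q_b - \frac{q_a}{\Gamma(\alpha)(b-a)^{1-\alpha}}\bigr) \in \R^m$ works, since then $(I^\alpha_{a+}\varphi)(b) = \varphi (b-a)^\alpha/\Gamma(\alpha+1)$.

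The hard part will be \emph{closedness}. My plan is to realize $K$ as the preimage of the point $(q_a,q_b)$ under the linear map $\phi : q \longmapsto ((I^{1-\alpha}_{a+} q)(a), q(b))$ and to show that $\phi$ is continuous from $(\mathrm{AC}^{\alpha,2}_{a+}, \Vert \cdot \Vert_{\mathrm{AC}^{\alpha,2}_{a+}})$ into $\R^m \times \R^m$. Writing $q$ through~\eqref{identificationleft} with $c = (I^{1-\alpha}_{a+} q)(a)$ and $\varphi = D^\alpha_{a+} q$, I would rearrange as $c/(\Gamma(\alpha)(t-a)^{1-\alpha}) = q(t) - (I^\alpha_{a+} \varphi)(t)$ and take $\mathrm{L}^2$-norms. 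Since $(t-a)^{-(1-\alpha)} \in \mathrm{L}^2$ (as $2(1-\alpha) < 1$) and $I^\alpha_{a+}$ is bounded from $\mathrm{L}^2$ into $\mathrm{L}^\infty$ by Proposition~\ref{nose} (case 4, since $\alpha > \frac{1}{2} = \frac{1}{p}$), this yields an estimate of the form $\Vert c \Vert \leq C_1 \Vert q \Vert_{\mathrm{AC}^{\alpha,2}_{a+}}$, i.e.\ continuity of the first coordinate of $\phi$. The second coordinate is then immediate from $q(b) = c/(\Gamma(\alpha)(b-a)^{1-\alpha}) + (I^\alpha_{a+}\varphi)(b)$ together with the pointwise bound $\vert(I^\alpha_{a+}\varphi)(b)\vert \leq \Vert I^\alpha_{a+}\varphi\Vert_{\mathrm{L}^\infty} \leq C_2 \Vert\varphi\Vert_{\mathrm{L}^2}$. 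Hence $\phi$ is continuous and $K = \phi^{-1}(\{(q_a,q_b)\})$ is closed.
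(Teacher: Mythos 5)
Your proof is correct, and the non-emptiness and convexity parts coincide with the paper's (your constant $\varphi$ is exactly the paper's $\theta$ after expanding the product). For closedness, however, you take a genuinely different route. The paper argues sequentially: from $q_n\to q$ and $D^\alpha_{a+}q_n\to D^\alpha_{a+}q$ in $\mathrm{L}^2$ it deduces $I^{1-\alpha}_{a+}q_n\to I^{1-\alpha}_{a+}q$ in $\mathrm{W}^{1,2}$, invokes the embedding of $\mathrm{W}^{1,2}$ into the continuous functions to pass to the limit in $(I^{1-\alpha}_{a+}q_n)(a)$, and then passes to the limit in the integral representation evaluated at $t=b$ to get $q(b)=q_b$. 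You instead prove directly that the linear evaluation map $q\longmapsto\bigl((I^{1-\alpha}_{a+}q)(a),\,q(b)\bigr)$ is bounded on $\bigl(\mathrm{AC}^{\alpha,2}_{a+},\Vert\cdot\Vert_{\mathrm{AC}^{\alpha,2}_{a+}}\bigr)$, which yields closedness as the preimage of a point. Your estimate for $c=(I^{1-\alpha}_{a+}q)(a)$ is sound: isolating $c\,(t-a)^{-(1-\alpha)}/\Gamma(\alpha)=q-I^\alpha_{a+}\varphi$ and taking $\mathrm{L}^2$ norms works because $(t-a)^{-(1-\alpha)}\in\mathrm{L}^2$ (here $\alpha>\frac12$ is essential, and it is assumed throughout Section~\ref{section5}) and $I^\alpha_{a+}$ is bounded on $\mathrm{L}^2$. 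One small caveat: Proposition~\ref{nose} as stated only gives the \emph{membership} $I^\alpha_{a+}\varphi\in\mathrm{L}^\infty$, not an operator norm bound; but the bound you need, $\vert(I^\alpha_{a+}\varphi)(b)\vert\leq\frac{(b-a)^{\alpha-1/2}}{\Gamma(\alpha)\sqrt{2\alpha-1}}\Vert\varphi\Vert_{\mathrm{L}^2}$, follows at once from the Cauchy--Schwarz inequality, so this is not a gap. Your approach buys something the paper's does not: explicit continuity (with norm constants) of the two boundary functionals on $\mathrm{AC}^{\alpha,2}_{a+}$, which is a reusable fact; it also avoids the paper's appeal to the (compact) Sobolev embedding, which is anyway stronger than needed there.
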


\begin{proof}
The convexity of $K$ is obvious. To prove that $K$ is nonempty, it is sufficient to consider
$$ q(t) = \dfrac{1}{\Gamma (\alpha)} \dfrac{q_a}{(t-a)^{1-\alpha}}+ (I^\alpha_{a+} \theta)(t), \ t \in (a,b] $$
with $\theta = \frac{\Gamma(\alpha+1)}{(b-a)^\alpha} q_b - \frac{\Gamma(\alpha+1)}{\Gamma(\alpha)(b-a)} q_a$. Let us prove that $K$ is closed. Let $(q_n)_{n \in \N} $ be a sequence of $K$ tending to $q$ in $\mathrm{AC}^{\alpha,2}_{a+}$. Our aim is to prove that $(I_{a+}^{1-\alpha}q)(a)=q_a$ and $q(b)=q_b$. Since $(q_n)_{n \in \N}$ tends to $q$ in $(\mathrm{L}^2,\Vert \cdot \Vert_{\mathrm{L}^2} )$, we have $(I^{1-\alpha}_{a+} q_n)_{n \in \N}$ tends to $I^{1-\alpha}_{a+} q$ in $(\mathrm{L}^2,\Vert \cdot \Vert_{\mathrm{L}^2} )$. Since $(D^{\alpha}_{a+} q_n )_{n \in \N} = (\frac{d}{dt} (I^{1-\alpha}_{a+} q_n) )_{n \in \N}$ tends to $D^{\alpha}_{a+} q = \frac{d}{dt} (I^{1-\alpha}_{a+} q)$ in $(\mathrm{L}^2,\Vert \cdot \Vert_{\mathrm{L}^2} )$, we conclude that $(I^{1-\alpha}_{a+} q_n)_{n \in \N}$ tends to $I^{1-\alpha}_{a+} q$ in $(\mathrm{W}^{1,2},\Vert \cdot \Vert_{\mathrm{W}^{1,2}} )$. From the compact embedding of $(\mathrm{W}^{1,2},\Vert \cdot \Vert_{\mathrm{W}^{1,2}} )$ in the set of continuous functions endowed with the usual uniform norm (see \cite{Bre}), we conclude that $((I^{1-\alpha}_{a+} q_n) (a))_{n \in \N}$ tends to $(I^{1-\alpha}_{a+} q) (a)$ and then $(I^{1-\alpha}_{a+} q) (a) =q_a$. Finally, the integral representation of $q_n$ at $t=b$ gives
$$ q_b = \dfrac{1}{\Gamma (\alpha)} \dfrac{q_a}{(b-a)^{1-\alpha}} + \dfrac{1}{\Gamma(\alpha)} \int_a^b \dfrac{D^{\alpha}_{a+} q_n(\t)}{(b-\t)^{1-\alpha}} \; d\t $$
for any $n \in \N$. Passing to the limit on $n$ leads to $q(b)=q_b$ and concludes the proof.
\end{proof}

Finally, we prove

\begin{theorem}\label{thmexistence}
The linear boundary value problem \eqref{10a}-\eqref{10b} has a solution
$q \in \mathrm{AC}_{a+}^{\alpha,2}$.
\end{theorem}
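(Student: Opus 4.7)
The proof will be a direct application of the Stampacchia theorem on the Hilbert space $\mathrm{AC}^{\alpha,2}_{a+}$ and the closed convex subset $K$ provided by the preceding two lemmas. I take as bilinear form the inner product itself,
\[
a(q_1,q_2) := \langle q_1, q_2 \rangle_{\mathrm{AC}^{\alpha,2}_{a+}} = \int_a^b q_1(\t) \cdot q_2(\t)\, d\t + \int_a^b (D^\alpha_{a+}q_1)(\t) \cdot (D^\alpha_{a+}q_2)(\t)\, d\t,
\]
which is manifestly continuous and coercive with constant $1$, and as continuous linear form
\[
\phi(v) := \int_a^b f(\t) \cdot v(\t)\, d\t,
\]
continuity following from Cauchy-Schwarz together with the inclusion $\mathrm{AC}^{\alpha,2}_{a+}\subset \mathrm{L}^2$ (valid because $\alpha>\tfrac{1}{2}$, by Proposition~\ref{remlr}). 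Stampacchia then produces a unique $q \in K$ with $a(q,h-q) \geq \phi(h-q)$ for every $h \in K$.

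Next, I exploit the affine structure of $K$. Write $K = q + K_0$ with the linear subspace
\[
K_0 := \{v\in \mathrm{AC}^{\alpha,2}_{a+} \,:\, (I^{1-\alpha}_{a+}v)(a) = 0 \text{ and } v(b)=0\}.
\]
For every $v \in K_0$ both $q+v$ and $q-v$ lie in $K$; testing the variational inequality against each and combining upgrades it to the variational equation $a(q,v) = \phi(v)$ for all $v\in K_0$. Moreover $\mathscr{C}^\infty_c \subset K_0$: any $h \in \mathscr{C}^\infty_c$ vanishes in a neighbourhood of both endpoints, so $h(b)=0$ trivially and $(I^{1-\alpha}_{a+}h)(a)=0$ because the integrand defining this Riemann-Liouville integral at $t=a$ vanishes identically near $a$. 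Specialising the variational equation to $h \in \mathscr{C}^\infty_c$ and rearranging gives
\[
\int_a^b (q(\t)-f(\t)) \cdot h(\t) + (D^\alpha_{a+}q)(\t) \cdot (D^\alpha_{a+}h)(\t)\, d\t = 0 \quad \text{for all } h\in \mathscr{C}^\infty_c.
\]

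The fractional fundamental lemma (Theorem~\ref{GDBR}) applied with $q_1 := q-f$ and $q_2 := D^\alpha_{a+}q$, both in $\mathrm{L}^2 \subset \mathrm{L}^1$, then ensures that $D^\alpha_{a+}q$ possesses a right-sided Riemann-Liouville derivative satisfying $D^\alpha_{b-}(D^\alpha_{a+}q) = -(q-f) = f-q$ almost everywhere on $[a,b]$. This is exactly \eqref{10a}, while the boundary conditions \eqref{10b} are automatic since $q\in K$. The main conceptual obstacle is the passage from the variational inequality supplied by Stampacchia to a form directly exploitable by Theorem~\ref{GDBR}: it is the affine structure of $K$ that upgrades the inequality to an equation on the linear subspace $K_0$, and it is the inclusion $\mathscr{C}^\infty_c \subset K_0$ that allows the fractional fundamental lemma to recover the Euler-Lagrange-type equation \eqref{10a}. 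All other verifications (continuity and coercivity of $a$, continuity of $\phi$, and the required $\mathrm{L}^1$-integrability of $q_1, q_2$) are routine.
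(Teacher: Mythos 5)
Your proposal is correct and follows essentially the same route as the paper: apply Stampacchia with the inner product as bilinear form and $\phi(v)=\int_a^b f\cdot v$, test the resulting variational inequality against $q\pm h$ with $h\in\mathscr{C}^\infty_c$ to obtain the variational equation, and conclude with the fractional fundamental lemma (Theorem~\ref{GDBR}). Your explicit identification of the subspace $K_0$ and the verification that $\mathscr{C}^\infty_c\subset K_0$ merely spell out details the paper leaves implicit.
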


\begin{proof}
Let us consider the continuous coercive bilinear form $a:\mathrm{AC}_{a+}^{\alpha
,2}\times \mathrm{AC}_{a+}^{\alpha,2}\longrightarrow\mathbb{R}$ given by
$$ a(q_1,q_2) = \langle q_1,q_2 \rangle_{\mathrm{AC}^{\alpha,2}_{a+}}, \quad q_1,q_2 \in \mathrm{AC}^{\alpha,2}_{a+},$$
the continuous linear form $\phi : \mathrm{AC}^{\alpha,2}_{a+} \longrightarrow \R$ given by
$$ \phi (q) = \int_a^b f(\t) \cdot q(\t) \; d\t, \ q \in \mathrm{AC}^{\alpha,2}_{a+} $$
and the nonempty closed convex set $K$ defined in the previous lemma. The classical Stampacchia theorem gives the existence of a function $q \in K$ such that
\begin{multline}
\int_a^b q(\t) \cdot (q(\t)-q_1(\t)) \; d\t + \int_a^b (D^\alpha_{a+} q)(\t) \cdot ((D^\alpha_{a+} q)(\t)-(D^\alpha_{a+} q_1)(\t)) \; d\t \\ \geq \int_a^b f(\t) \cdot (q(\t)-q_1(\t)) \; d\t
\end{multline}
for any $q_1 \in K$. Considering the above inequality with functions $q_1 = q \pm h\in K$
with $h\in \mathscr{C}^\infty_c$ we assert that%
$$ \int_a^b (q(\t)-f(\t)) \cdot h(\t) + D^\alpha_{a+} q(\t) \cdot D^\alpha_{a+} h(\t) \; d\t = 0 $$
for any $h\in \mathscr{C}^\infty_c$. The fractional fundamental lemma (Theorem~\ref{GDBR}) concludes.
\end{proof}

\section*{Notes and acknowledgement}

The results of this paper (except Section~\ref{section5}) were presented during the 7th International Workshop on Multidimensional (nD) Systems, (Poitiers, France, 2011) and published - without proofs - in conference proceedings (see \cite{Poitiers}) in the special case $\alpha \in (\frac{1}{2},1)$. The aim of this paper is to give detailed proofs of these theorems and to improve them by removing the assumption $\alpha \in (\frac{1}{2},1)$. In Section~\ref{section5}, we add an application of the obtained results to linear fractional boundary value problems. To our best knowledge, the results presented in the paper have not been obtained by other authors. 

Finally, let us mention that \cite{IdMaj} contains the following theorems: a fractional fundamental lemma for $\alpha \in(n-1/2,n)$ with $n\in\mathbb{N}$, $n\geq2$, a theorem on the fractional integration by parts for $\alpha \in(n-1,n)$ with $n\in\mathbb{N}$, $n\geq2$ and $\frac{1}{\alpha-n+1} < p < \infty$, $\frac{1}{\alpha-n+1} < q < \infty$.


The project was partially financed with funds of National Science Centre,
granted on the basis of decision DEC-2011/01/B/ST7/03426.

\end{document}